\setlist{font=\normalfont,topsep=1ex,parsep=0ex}
\setlist[enumerate]{label=(\alph*)}
\numberwithin{equation}{section}
\numberwithin{table}{section}    
\numberwithin{figure}{section}
\crefname{figure}{Figure}{Figures}
\crefname{table}{Table}{Tables}
\crefname{assumption}{Assumption}{Assumptions}
\Crefname{ALC@unique}{Step}{Steps}
\newlist{alglist}{enumerate}{1}
\setlist[alglist]{topsep=1ex,parsep=0ex,leftmargin=*,label=\textbf{Step~\arabic*.}}
\DeclareMathOperator*{\argmin}{argmin}
\newtheoremstyle{bolddef}{}{}{\normalfont}{}{\bfseries}{.}{ }{\thmname{#1}\thmnumber{ #2}\thmnote{ (#3)}}
\newtheoremstyle{boldplain}{}{}{\itshape}{}{\bfseries}{.}{ }{\thmname{#1}\thmnumber{ #2}\thmnote{ (#3)}}
\theoremstyle{bolddef}
\newtheorem{definition}{Definition}[section]
\newtheorem{algorithm}[definition]{Algorithm}
\newtheorem{assumption}[definition]{Assumption}
\newtheorem{remark}[definition]{Remark}
\theoremstyle{boldplain}
\newtheorem{lemma}[definition]{Lemma}
\newtheorem{theorem}[definition]{Theorem}
\newtheorem{proposition}[definition]{Proposition}
\newlength\figureheight
\newlength\figurewidth
\pgfplotsset{width=7cm,compat=1.3}
\definecolor{todocolor}{rgb}{1.0,0.3,0.3}
\newcommand\email[1]{\href{mailto:#1}{\texttt{#1}}}
\newcommand{\orcid}[1]{ORCID: \href{https://orcid.org/#1}{#1}}
\newcommand{\mscLink}[1]{\href{http://www.ams.org/mathscinet/msc/msc2020.html?t=#1}{#1}}
\begin{document}

\title{
	\bfseries\scshape 
	Inexact Penalty Decomposition Methods for Optimization Problems with Geometric Constraints
	}

\date{\today}

\author{Christian Kanzow
	\thanks{%
		University of Würzburg,
		Institute of Mathematics,
		97074 Würzburg,
		Germany,
		\email{kanzow@mathematik.uni-wuerzburg.de},
		\orcid{0000-0003-2897-2509}
	}
	\hspace*{-4mm} \and \hspace*{-4mm}
	Matteo Lapucci
	\thanks{%
		Department of Information Engineering, Università degli Studi di Firenze, Via di Santa Marta 3,
		50139 Firenze, Italy,
		\email{matteo.lapucci@unifi.it},
		\orcid{0000-0002-2488-5486}
	} 
}

\maketitle

{
\small\textbf{\abstractname.}
This paper provides a theoretical and numerical investigation
of a penalty decomposition scheme for the solution of 
optimization problems with geometric constraints. In particular,
we consider some situations where parts of the constraints are
nonconvex and complicated, like cardinality constraints, 
disjunctive programs, or matrix problems involving rank
constraints. By a variable duplication and decomposition strategy, the method presented here explicitly handles these difficult
constraints, thus generating
iterates which are feasible with respect to them,
while the remaining (standard and supposingly 
simple) constraints are tackled by sequential penalization. Inexact optimization steps are proven sufficient for the resulting algorithm to work, so that it is employable even with difficult objective functions. 
The current work is therefore a significant generalization of
existing papers on penalty decomposition methods. On the other 
hand, it is related to some recent publications which use
an augmented Lagrangian idea to solve optimization problems
with geometric constraints. Compared to these methods, the
decomposition idea is shown to be numerically superior since
it allows much more freedom in the choice of the subproblem
solver, and since the number of certain (possibly expensive) projection steps is significantly less. Extensive numerical results 
on several highly complicated classes of optimization problems
in vector and matrix spaces indicate that the current method
is indeed very efficient to solve these problems.
\par\addvspace{\baselineskip}
}

{
\small\textbf{Keywords.}
Penalty Decomposition, Augmented Lagrangian, Mordukhovich-Stationarity,
Asymptotic Regularity, Asymptotic Stationarity, Cardinality 
Constraints, Low-Rank Optimization, Disjunctive Programming
\par\addvspace{\baselineskip}
}

{
\small\textbf{AMS subject classifications.}
	\mscLink{49J53}, \mscLink{65K10}, \mscLink{90C22},
	\mscLink{90C30}, \mscLink{90C33}
\par\addvspace{\baselineskip}
}

\section{Introduction}\label{Sec:Intro}

We consider the program
\begin{equation}\label{P}
	\min_x \ f(x) \quad \text{s.t.} \quad G(x) \in C, \ x \in D,
\end{equation}
where $ f: \mathbb X \to \mathbb R $ and $ G: \mathbb X \to \mathbb Y $ are continuously differentiable mappings, $ \mathbb X $ and $ \mathbb Y $ are Euclidean spaces, i.e., real and finite-dimensional Hilbert spaces, $ C \subseteq \mathbb Y $ is nonempty, closed, and convex, whereas $ D \subseteq \mathbb X $ is only assumed to be nonempty and closed (not necessarily convex), representing a possibly complicated set.

This very general setting (analyzed for example in \cite{JiaKanzowMehlitzWachsmuth2021}) covers, for example, standard nonlinear programming problems with convex constraints, but also difficult \textit{disjunctive programming} problems \cite{Benko2022,Benko2018,Flegel2007,Mehlitz2020}, e.g., complementarity \cite{Ye1999}, vanishing \cite{Achtziger2008}, switching \cite{Mehlitz2020b} and cardinality constrained \cite{Lapucci2022,Lapucci2021} problems. Matrix optimization problems such as low-rank approximation \cite{Kishore2017,Markovsky2012} are also captured by our setting.

Problems with this structure, where the feasible set consists of the intersection of a set of constraints expressed in analytical form and another complicated set, without regularity guarantees but manageable for example by easy projections, have been deeply studied in recent years.  In particular, approaches based on decomposition and sequential penalty or augmented Lagrangian methods have been proposed for the convex case \cite{Galvan2020}, the cardinality constrained case \cite{Lapucci2021,Lu2013} and the low-rank approximation case \cite{Zhang2011}; the recurrent idea in all these works consists of the application of the variable splitting technique \cite{Guignard1987,Jornsten1985}, to then define a penalty function associated with the differentiable constraints and the additional equality constraint linking the two blocks of variables and finally solve the problem by a sequential penalty method. The optimization of the penalty function is carried out by a two-block alternatimg minimization scheme \cite{Grippo2000}, which can be run in an exact \cite{Lu2013,Zhang2011} or inexact \cite{Galvan2020,Lapucci2021} fashion. 

The aim of this work is to extend the \textit{inexact Penalty Decomposition} approach to the general setting \eqref{P} in such 
a way that it can deal with arbitrary abstract constraints $ D $
(at least theoretically, in practice $ D $ needs to be such that
projections onto this set are easy to compute) and that it
allows additional (seemingly simple) constraints given by
$ G(x) \in C $. This setting is related to some recent work
on (safeguarded) augmented Lagrangian methods, see, in particular, \cite{JiaKanzowMehlitzWachsmuth2021}, where the resulting subprobems are solved by a projected gradient-type method, which might be
inefficient especially for ill-conditioned problems. 
The decomposition idea
used here allows a much wider choice of subproblem solvers, 
usually resulting in a more efficient solver of the given
optimization problem \eqref{P}. 

The paper is organized as follows: Section~\ref{Sec:Prelims}
summarizes some preliminary concepts and results. In particular,
we recall the definitions of an M-stationary point (the 
counterpart of a KKT point for the general setting from 
\eqref{P}), of an AM-stationary point (as a sequential
version of M-stationarity) and of an AM-regular point (this being
a suitable and relatively weak constraint qualification).
Section~\ref{Sec:Alg} then presents the \textit{Penalty Decomposition} method together with a global convergence theory,
assuming that the resulting subproblems can be solved inexactly
up to a certain degree. In Section~\ref{Sec:Subproblem}, we then
present a class of \textit{inexact alternating minimization} methods which, under certain assumptions, 
are guaranteed to find the desired approximate solution of the 
subproblems arising in the outer penalty scheme. The 
remaining part of the paper is then devoted to the implementation
of the overall method and corresponding numerical results. To
this end, Section~\ref{sec:realizations} first discusses 
several instances of the 
general setting \eqref{P} with difficult constraints $ D $
where our method can be applied to quite efficiently since
projections onto $ D $ are simple and/or known analytically 
(though the latter does not necessarily imply that these projections
are easy to compute numerically). In Section~\ref{sec:experiments},
we then present the results of an extensive numerical testing,
where we also compare our method, using different realizations,
with the augemented Lagrangian method from \cite{JiaKanzowMehlitzWachsmuth2021}. We conclude with some
final remarks in Section~\ref{Sec:Conclusions}.

\section{Preliminaries}\label{Sec:Prelims}

The Euclidean projection $ P_C: \mathbb Y \to \mathbb Y $ onto the nonempty, closed, and convex set $ C $ is defined by 
\begin{equation*}
	P_C (y) := \text{argmin}_{z \in C} \| z - y \| .
\end{equation*}
The corresponding distance function $ d_C : \mathbb Y \to \mathbb R $ can then be written as
\begin{equation*}
	\text{dist}_C(y) := \min_{z \in C} \| z - y \| = \| P_C (y) - y \|.
\end{equation*}
Note that the distance function is nonsmooth (in general), but the squared distance function 
\begin{equation*}
	s_C (y) := \frac{1}{2} \text{dist}_C^2(y)
\end{equation*}
is continuously differentiable everywhere with derivative given by
\begin{equation}\label{Eq:Derivative-s}
	\nabla s_C(y) = y - P_C (y),
\end{equation}
see \cite[Cor.\ 12.30]{BauschkeCombettes2011}. Moreover, projections onto the nonempty and closed set $ D $ also exist, but are not necessarily unique. Therefore, we define the (usually set-valued) projection operator $ \Pi_D : \mathbb X \rightrightarrows \mathbb X $ by 
\begin{equation*}
	\Pi_D(x) := \argmin_{z \in D} \| z - x \| \neq \emptyset .
\end{equation*}
The corresponding distance function $ \text{dist}_D ( \cdot ) $ is, of course, single-valued again.
Furthermore, given a set-valued mapping $ S: \mathbb X 
\rightrightarrows \mathbb X $ on an arbitrary Euclidean space $ \mathbb X $, we define the {\em outer limit} of $ S $ at a point $ \bar x $ by
\begin{equation*}
	\limsup_{x \to \bar x} S(x) := \big\{ y \in \mathbb X \, \big| \, \exists x^k \to \bar x, \exists y^k \to y \text{ with } y^k \in S(x^k) \ \forall k \in \mathbb{N} \big\}.
\end{equation*}
This allows to define the {\em limiting normal cone} at a point $ x \in D $ by
\begin{equation*}
	\mathcal{N}_D^{\lim} (x) := \limsup_{v \to x} \big( \text{cone} \big( v - \Pi_D (v) \big) \big),
\end{equation*}
see \cite[Sect.\ 1.1]{Mordukhovich2018} for further details. Writing
\begin{equation*}
	x \to_D \bar x \  \Longleftrightarrow \ x \to \bar x, \ x \in D
\end{equation*} 
for sequences converging to an element $ \bar x \in D $ such that the whole sequence belongs to $ D $, the limiting normal cone has the important robustness property
\begin{equation}\label{Eq:robustness}
	\limsup_{x \to_D \bar x} \mathcal{N}_D^{\lim} (x) = \mathcal{N}_D^{\lim} (\bar x)
\end{equation}
that will be exploited heavily in our subsequent analysis, see \cite[Prop.\ 1.3]{Mordukhovich2018}.

Note that, for $ D $ being convex, this limiting normal cone reduces to the standard normal cone from convex analysis, i.e., we have
\begin{equation*}
	\mathcal{N}_D^{\lim} ( \bar x ) = \mathcal{N}_D ( \bar x ) :=
    \{ \lambda \in \mathbb X \, | \, \langle \lambda, x - \bar x \rangle \leq 0 \ \forall x \in D \, \}
\end{equation*}
for any given $ \bar x \in D $. For points $ \bar x \not\in D $, we set $ \mathcal{N}_D^{\lim} ( \bar x ) : = \mathcal{N}_D ( \bar x ) := \emptyset $. For the convex set $ C $, the standard normal cone and the projection operator are related by
\begin{equation}\label{Eq:RelNProj}
	p = P_C (y) \ \Longleftrightarrow y - p \in \mathcal{N}_C (p) ,
\end{equation}
see \cite[Prop.\ 6.46]{BauschkeCombettes2011}.

We next introduce a stationarity condition which generalizes the concept of a KKT point to constrained optimization problems with possibly difficult constraints as given by the set $ D $ in our setting \eqref{P}.

\begin{definition}\label{Def:MStat}
A feasible point $ \bar x \in \mathbb X $ of the optimization problem \eqref{P} is called an {\em M-stationary point} (Mordukhovich-stationary point) of \eqref{P} if there exists a multiplier $ \lambda \in \mathbb Y $ such that
\begin{equation*}
	0 \in \nabla f(\bar x) + G'(\bar x)^* \lambda + \mathcal{N}_D^{\lim} (\bar x), \quad 
	\lambda \in \mathcal{N}_C \big( G( \bar x) \big) .
\end{equation*}
\end{definition}

\noindent
Note that this definition coincides with the one of a KKT point if $ D $ is convex. The following is a sequential version of M-stationarity.

\begin{definition}\label{Def:AMStat}
A feasible point $ \bar x \in \mathbb{X} $ of the optimization problem \eqref{P} is called an {\em AM-stationary point} (asymptotically M-stationary point) of \eqref{P} if there exist sequences $ \{ x^k \}, \{ \varepsilon^k \} \subseteq \mathbb X $ and $ \{ \lambda^k \}, \{ z^k \} \subseteq \mathbb Y $ such that $ x^k \to \bar x $, $ \varepsilon^k \to 0 $, $ z^k \to 0 $, as well as 
\begin{equation*}
	\varepsilon^k \in \nabla f(x^k) + G'(x^k)^* \lambda^k + \mathcal{N}_D^{\lim} (x^k), \quad \lambda^k \in \mathcal{N}_C \big( G(x^k) - z^k \big) 
\end{equation*}
for all $ k \in \mathbb N $.
\end{definition}

\noindent 
Note that the previous definition generalizes the related 
concept of AKKT points introduced for standard nonlinear 
programs in \cite{Andreani2011} to our setting with the 
more difficult constraints. In a similar way, the subsequent
regularity conditions are also motivated by related ones
from \cite{Andreani2016} , where they were presented for standard 
nonlinear programs. 

Every M-stationary point is obviously also AM-stationary, whereas
the opposite implication will be guaranteed to hold by a regularity
condition that will now be introduced. To this end, let us write
\begin{equation*}
	\mathcal{M} (x,z) := G'(x)^* \mathcal{N}_C \big(G(x) - z \big) +
	\mathcal{N}_D^{\lim} (x).
\end{equation*}
Recall that $ \mathcal{N}_D^{\lim} (x) $ is nonempty if and only if
$ x \in D $, which is therefore an implicit requirement for the set
$ \mathcal{M} (x,z) $ to be nonempty. Moreover, we consider the set
\begin{equation*}
	\limsup_{x \to \bar x, z \to 0} \mathcal{M} (x,z) =
	\big\{ v \, \big| \, \exists x^k \to_D \bar x, \exists z^k \to 0:
	v^k \to v \text{ and } v^k \in \mathcal{M} (x^k,z^k) \enspace
	\forall k \in \mathbb{N} \big\}.
\end{equation*}
Note that the auxiliary sequence $ \{ z^k \} $ needs to be introduced
since the elements $ G(x^k) $ do not necessarily belong to $ C $,
whereas $ x^k $ is supposed to be an element of $ D $.

\begin{definition}\label{Def:AM-regular}
Let $ \bar x $ be feasible for \eqref{P}. Then $ \bar x $ is called
{\em AM-regular} for \eqref{P} if $ \limsup_{x \to \bar x, z \to 0} \mathcal{M} (x,z) \subseteq \mathcal{M} (\bar x, 0) $.
\end{definition}

\noindent
Using this terminology, the following statements hold, cf.\ 
\cite{Mehlitz2020c} for further details.

\begin{theorem}\label{Thm:AM-Main-Theorem}
The following statements hold:
\begin{itemize}
	\item[(a)] Every local minimum of \eqref{P} is an AM-stationary 
	   point.
	\item[(b)] If $ \bar x $ is an AM-stationary point satisfying
	   AM-regularity, then $ \bar x $ is an M-stationary point of 
	   \eqref{P}.
	\item[(c)] Conversely, if for every continuously differentiable
	   function $ f $, the implication
	   \begin{center}
	   	   $ \bar x $ is an AM-stationary point $ \Longrightarrow $
	   	   $ \bar x $ is an M-stationary point
	   \end{center}
       holds for the corresponding optimization problem \eqref{P},
       then $ \bar x $ is AM-regular.
\end{itemize}
\end{theorem}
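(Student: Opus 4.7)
The plan is to tackle the three parts separately, with part~(a) being the only one requiring real work; parts~(b) and~(c) will follow rather mechanically from unwinding the definitions of AM-stationarity and AM-regularity.

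For part~(a), my approach is the classical penalty/Tikhonov construction. Let $\bar x$ be a local minimum and fix $\delta>0$ small enough that $\bar x$ minimizes $f$ on the intersection of the feasible set of~\eqref{P} with $\overline{B_\delta(\bar x)}$. For each $k\in\N$, consider the auxiliary problem
\begin{equation*}
  \min_{x}\; f(x)+\tfrac{k}{2}\operatorname{dist}^2_{C}\!\bigl(G(x)\bigr)+\tfrac{1}{2}\|x-\bar x\|^2
  \quad\text{s.t.}\quad x\in D\cap \overline{B_\delta(\bar x)}.
\end{equation*}
Because $D\cap\overline{B_\delta(\bar x)}$ is nonempty and compact and the objective is continuous, a global minimizer $x^k$ exists. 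A standard penalty argument, comparing the objective value at $x^k$ with its value at the feasible reference point $\bar x$, shows that $x^k\to\bar x$, that $\operatorname{dist}_C(G(x^k))\to 0$, and that eventually $x^k\in \operatorname{int} B_\delta(\bar x)$ so the ball constraint drops out. Applying Fermat's rule together with the sum rule for the limiting normal cone (and using the differentiability of $s_C$ via \eqref{Eq:Derivative-s}) yields
\begin{equation*}
  0\in \nabla f(x^k)+G'(x^k)^*\bigl[k\bigl(G(x^k)-P_C(G(x^k))\bigr)\bigr]+(x^k-\bar x)+\Nor_D^{\lim}(x^k).
\end{equation*}
Setting $z^k:=G(x^k)-P_C(G(x^k))$, $\lambda^k:=k\,z^k$, and $\varepsilon^k:=-(x^k-\bar x)$, one has $z^k\to 0$, $\varepsilon^k\to 0$, and, since $\mathcal{N}_C$ is a cone, $\lambda^k\in\mathcal{N}_C(G(x^k)-z^k)$ by the characterization \eqref{Eq:RelNProj}. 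This verifies Definition~\ref{Def:AMStat}. The main technical subtlety here is carefully arranging the auxiliary sequence $z^k$ so that $G(x^k)-z^k\in C$, which is exactly what the projection buys us.

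For part~(b), I would simply unwind the definitions. Let $\{x^k\},\{\varepsilon^k\},\{\lambda^k\},\{z^k\}$ be the AM-stationarity sequences. The inclusions in Definition~\ref{Def:AMStat} read
\begin{equation*}
  \varepsilon^k-\nabla f(x^k)\in G'(x^k)^*\mathcal{N}_C\bigl(G(x^k)-z^k\bigr)+\Nor_D^{\lim}(x^k)=\mathcal{M}(x^k,z^k),
\end{equation*}
and the left-hand side tends to $-\nabla f(\bar x)$ by continuity of $\nabla f$. Hence $-\nabla f(\bar x)\in\limsup_{x\to\bar x,\,z\to 0}\mathcal{M}(x,z)$, and AM-regularity gives $-\nabla f(\bar x)\in\mathcal{M}(\bar x,0)$. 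Decomposing this element according to the definition of $\mathcal{M}$ yields a multiplier $\lambda\in\mathcal{N}_C(G(\bar x))$ witnessing M-stationarity.

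For part~(c), I would construct a tailor-made objective to probe the outer limit. Pick any $v\in\limsup_{x\to\bar x,\,z\to 0}\mathcal{M}(x,z)$ and choose the linear objective $f(x):=-\langle v,x\rangle$ so that $\nabla f\equiv -v$. By definition of the outer limit, there are sequences $x^k\to_D\bar x$, $z^k\to 0$, and $v^k\to v$ with $v^k\in\mathcal{M}(x^k,z^k)$; writing $v^k=G'(x^k)^*\lambda^k+\mu^k$ with $\lambda^k\in\mathcal{N}_C(G(x^k)-z^k)$ and $\mu^k\in\Nor_D^{\lim}(x^k)$, and setting $\varepsilon^k:=v^k-v\to 0$, one checks that $\bar x$ is AM-stationary for this particular $f$. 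By hypothesis $\bar x$ is then M-stationary, which translates precisely to $v\in \mathcal{M}(\bar x,0)$. Since $v$ was arbitrary, AM-regularity holds. The key idea here is the classical trick of encoding an arbitrary limit vector into the gradient of a linear objective to transfer information from the abstract set $\mathcal M$ into the stationarity system.
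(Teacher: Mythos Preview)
The paper does not actually prove this theorem; immediately before the statement it writes ``the following statements hold, cf.\ \cite{Mehlitz2020c} for further details'' and gives no argument of its own. Your proposal is correct and is precisely the standard proof one finds in that literature: the penalty--Tikhonov localization for~(a), the direct unwinding of the definition of $\mathcal{M}$ and the outer limit for~(b), and the linear-objective probe $f(x)=-\langle v,x\rangle$ for~(c). There is therefore nothing in the paper to compare against, and your argument would serve as a self-contained proof had the authors chosen to include one.
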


\noindent
Statement (a) shows that every local minimum of \eqref{P} is an 
AM-stationary point even in the absense of any constraint qualification
(CQ for short).
Hence AM-stationary is a (sequential) first-order optimality condition.
In order to guarantee that an AM-stationary point is already an
M-stationary point (hence a KKT point in the standard setting of
a nonlinear program, say), we require a CQ,
namely the AM-regularity condition, cf.\ Theorem~\ref{Thm:AM-Main-Theorem} (b). 
The final statement (c) of that
result shows that, in a certain sense, AM-regularity is the weakest
CQ which implies AM-stationary points to be
M-stationary. In fact, this AM-regularity condition turns out to be a fairly
weak condition. For example, for standard nonlinear programs, AM-regularity is stronger than the Abadie CQ, but weaker than most of the other well-known CQs like MFCQ (Mangasarian-Fromovitz CQ), CRCQ (constant rank CQ), CPLD (constant positive linear dependence), and RCPLD (relaxed CPLD), to mention at least some of the more prominent ones. We refer the interested reader to \cite{Andreani2018,Mehlitz2020c} and references therein
for further details. 

The algorithm, to be described in the following section, is based on the reformulation
\begin{equation}\label{P-Decomposed}
	\min_{x,y} f(x) \quad \text{s.t.} \quad x-y = 0, \ G(x) \in C, y \in D,
\end{equation}
of the given optimization problem \eqref{P}. The previous notions of M- and AM-stationarity and
AM-regularity can be directly translated to this program by observing that \eqref{P-Decomposed}
can be written in the format of \eqref{P} as
\begin{eqnarray*}
	\tilde x & := & (x,y), \\
	\tilde f ( \tilde x) & := & \tilde f (x,y) \ := \ f(x), \\
	\tilde G (\tilde x) & := & \tilde G (x,y) \ := \ \begin{pmatrix}
		G(x) \\ x - y \end{pmatrix}, \\
	\tilde C & := & C \times 0_{\mathbb X}, \\
	\tilde D & := & \mathbb X \times D .
\end{eqnarray*}
The counterpart of Theorem~\ref{Thm:AM-Main-Theorem} then also holds for the corresponding
(A)M-stationa\-rity and regularity concepts defined for the formulation~\eqref{P-Decomposed}. Note that regularity 
conditions and constraint qualifications depend on the 
explicit formulation of a constraint system, hence the 
corresponding concepts are not necessarily equivalent for 
the two formulations of our program. We stress, however, that 
an easy inspection shows that the important notion
of an M-stationary point for \eqref{P-Decomposed} is equivalent to the notion of an M-stationary point for \eqref{P}. 

For the sake of completenss, and since this condition will be used explicitly in our
convergence analysis, let us write down explicitly the resulting AM-stationarity condition
for the reformulated program \eqref{P-Decomposed}: with the above identifications, a
feasible point $ \tilde x^* $ of \eqref{P-Decomposed} is AM-stationary if there exist sequences $ \{ \tilde x^k \} $, $ \{ \tilde \varepsilon^k \} $, and $ \{ \tilde \lambda^k \} $, $  \{ \tilde z^k \} $ such that 
$ \tilde x^k \to \tilde x^* $, $ \tilde z^k \to 0 $, $ \tilde \varepsilon^k \to 0 $ as well as
\begin{equation}\label{Eq:Normal-Decomposed}
	\tilde \varepsilon^k \in \nabla \tilde f (\tilde x^k) + \tilde G' ( \tilde x^k )^* \tilde \lambda^k + \mathcal{N}_{\tilde D}^{\lim} ( \tilde x^k ) \quad \text{and} \quad \tilde \lambda^k \in \mathcal{N}_{\tilde C} \big( \tilde G ( \tilde x^k) - \tilde z^k \big)
\end{equation}
for all $ k $. Using the definitions of $ \tilde f, \tilde G $ etc., exploiting standard properties of the limiting and standard normal cones (in particular, the Cartesian product rule, cf.\ \cite[Prop.\ 6.41]{RockafellarWets2009}), and writing $ \tilde x^k =: (x^k,y^k), \tilde \lambda^k =: ( \lambda^k, \mu^k) $ as well as $ z^k $ for the first block of $ \tilde z^k $ (the second block component of $ \tilde z^k $
turns out to be irrelevant), we see that the two conditions from \eqref{Eq:Normal-Decomposed} can be rewritten as
\begin{equation}\label{Eq:TwoInclusions}
	\tilde \varepsilon^k \in \begin{pmatrix}
		\nabla f(x^k) + G'(x^k)^* \lambda^k + \mu^k \\
		- \mu^k + \mathcal{N}_D^{\lim} (y^k) 
	\end{pmatrix} \quad \text{and} \quad 
	\begin{pmatrix}
		\lambda^k \\ \mu^k 
	\end{pmatrix}
	\in 
	\begin{pmatrix}
		\mathcal{N}_C \big( G(x^k) - z^k \big) \\ \mathbb X
	\end{pmatrix} .
\end{equation}

\section{Algorithm and Convergence}\label{Sec:Alg}

The algorithm to be presented here is based on the reformulation \eqref{P-Decomposed} of the
given program \eqref{P}. The idea is to take advantage of the fact that the constraints
$ G(x) \in C $ and $ y \in D $ occur in a decomposed way. This formulation allows to develop an alternating
direction-type penalty scheme for the solution of the orginal problem \eqref{P}. To this end, let $ \tau > 0 $ be a penalty parameter and define the partial penalty function
\begin{equation}
	q_{\tau} (x,y) := f(x) + \frac{\tau}{2} \Big( \| x-y \|^2 + \text{dist}_C^2 \big( G(x) \big) \Big).
\end{equation}
Note that $ q_{\tau} $ does not include the potentially difficult constraint $ x \in D $, which we therefore have to deal with explicitly. The general algorithmic scheme that we will investigate here is the following one.

\begin{algorithm}\label{Alg:PenDecomp}
(Inexact Penalty Decomposition Method)
\begin{itemize}
	\item[(S.0)] Choose $ \delta_0 \geq 0 $ and $ \tau_0 > 0 $, a starting point $ (x^0,y^0) \in \mathbb X \times \mathbb X $, and set $ k := 0 $.
	\item[(S.1)] If a suitable termination criterion holds: STOP.
	\item[(S.2)] Compute $ \big( x^{k+1}, y^{k+1} \big) $ such that 
	   \begin{equation}\label{Eq:Innerk-1}
	   	   \big\| \nabla_x q_{\tau_k} (x^{k+1}, y^{k+1} ) \big\| \leq \delta_k
	   \end{equation}
    and
    \begin{equation}\label{Eq:Innerk-2}
    	y^{k+1} \in \text{argmin}_{y \in D} \ q_{\tau_k} (x^{k+1},y)
    \end{equation}
    hold.
    \item[(S.3)] Choose $ \delta_{k+1} \leq \delta_k, \tau_{k+1} > \tau_k, k \leftarrow k +1 $, and go to (S.1).
\end{itemize}
\end{algorithm}

\noindent
Note that Algorithm~\ref{Alg:PenDecomp} is a very general scheme for the solution of the reformulated problem \eqref{P-Decomposed}. The main computational burden is in (S.2). We will see how this step can be realized by an alternating minimization-type iteration
in Section~\ref{Sec:Subproblem}. Here we only note that the computation of the exact minimizer $ y^{k+1} = \text{argmin}_{y \in D} \ q_{\tau_k} (x^{k+1},y) $ can be carried out very easily if projections onto the 
set $ D $ can be computed efficiently. This follows immediately from the definition of $ q_{\tau_k} $, which implies that $ y^{k+1} $ is characterized by
\begin{equation}
	\label{y-proj}
	y^{k+1} \in \Pi_D (x^{k+1}).
\end{equation}
Some examples of complicated (nonconvex) sets $ D $, where this projection is easy to compute, will be given in the numerical section.

The remaining part of this section is devoted to the global
convergence properties of the general scheme from Algorithm~\ref{Alg:PenDecomp}. The technique of proof patterns the one used in \cite{JiaKanzowMehlitzWachsmuth2021} for an augmented Lagrangian method. 

We begin with a feasibility-type result. To this end, recall that all penalty-type methods suffer from the fact that accumulation points may not be feasible for the given optimization problem. The following result shows that such an accumulation point still has a very nice property.

\begin{proposition}\label{Prop:Feasibility}
Let $ \{ (x^k,y^k) \} $ be a sequence generated by Algorithm~\ref{Alg:PenDecomp} with $ \{ \delta_k \} $ being bounded and $ \{ \tau_k \} \to \infty $. Then every accumulation point $ (\bar x, \bar y) $ of the sequence $ \{ (x^k,y^k) \} $ is an M-stationary point of the feasibility problem
\begin{equation}\label{Eq:FeasProblem}
   \min_{x,y} \ \frac{1}{2} \text{dist}_C^2 \big( G(x) \big) + \frac{1}{2} \| x-y \|^2
   \quad \text{s.t.} \quad y \in D.
\end{equation}
\end{proposition}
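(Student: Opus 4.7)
The plan is to verify that $(\bar x,\bar y)$ is an M-stationary point of the feasibility problem \eqref{Eq:FeasProblem}, which (viewing the objective as $\tilde f(x,y):=\tfrac12\text{dist}_C^2(G(x))+\tfrac12\|x-y\|^2$ and the feasible set as $\mathbb{X}\times D$) reduces via the Cartesian product rule for limiting normal cones and the derivative formula \eqref{Eq:Derivative-s} to three items: $\bar y\in D$, the gradient identity
\[
G'(\bar x)^*\bigl(G(\bar x)-P_C(G(\bar x))\bigr)+(\bar x-\bar y)=0,
\]
and the inclusion $\bar x-\bar y\in \mathcal{N}_D^{\lim}(\bar y)$. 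Feasibility is immediate: \eqref{y-proj} forces $y^{k+1}\in D$ for every $k$, and closedness of $D$ transfers this property to any accumulation point.

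For the gradient identity, I would differentiate $q_{\tau_k}$ to get $\nabla_x q_{\tau_k}(x,y)=\nabla f(x)+\tau_k(x-y)+\tau_k G'(x)^*(G(x)-P_C(G(x)))$, plug this into \eqref{Eq:Innerk-1}, and divide by $\tau_k$ to obtain
\[
\Bigl\|\tfrac{1}{\tau_k}\nabla f(x^{k+1})+(x^{k+1}-y^{k+1})+G'(x^{k+1})^*\bigl(G(x^{k+1})-P_C(G(x^{k+1}))\bigr)\Bigr\|\le\tfrac{\delta_k}{\tau_k}.
\]
Passing to the limit along the subsequence converging to $(\bar x,\bar y)$, the right-hand side vanishes since $\{\delta_k\}$ is bounded and $\tau_k\to\infty$; the first term inside the norm vanishes likewise by continuity of $\nabla f$, while the remaining terms converge by continuity of $G$, $G'$, and $P_C$. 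This yields the desired identity.

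For the normal cone inclusion, I would note that \eqref{Eq:Innerk-2} combined with \eqref{y-proj} gives $y^{k+1}\in\Pi_D(x^{k+1})$. The standard first-order optimality condition for this projection problem (equivalently, $x^{k+1}-y^{k+1}$ is a proximal normal to $D$ at $y^{k+1}$) yields $x^{k+1}-y^{k+1}\in\mathcal{N}_D^{\lim}(y^{k+1})$. Since $y^{k+1}\in D$, $y^{k+1}\to\bar y$, and $x^{k+1}-y^{k+1}\to\bar x-\bar y$, the robustness property \eqref{Eq:robustness} of the limiting normal cone delivers $\bar x-\bar y\in\mathcal{N}_D^{\lim}(\bar y)$. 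The main technical care lies in the bookkeeping when passing to the limit with $\tau_k\to\infty$ against the bounded sequence $\{\delta_k\}$, and in invoking \eqref{Eq:robustness} with a sequence that actually lies in $D$ — a property secured precisely by the exact-minimization step \eqref{Eq:Innerk-2}.
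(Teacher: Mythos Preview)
Your proof is correct and follows essentially the same route as the paper: divide the approximate $x$-stationarity condition \eqref{Eq:Innerk-1} by $\tau_k$ and pass to the limit using continuity, and use the projection characterization $y^{k+1}\in\Pi_D(x^{k+1})$ together with the robustness property \eqref{Eq:robustness} to obtain the normal cone inclusion. The only cosmetic difference is that the paper phrases the $y$-step as the first-order condition $0\in\tau_k(y^{k+1}-x^{k+1})+\mathcal{N}_D^{\lim}(y^{k+1})$ and then invokes the cone property, whereas you go directly through proximal normals; the content is identical.
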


\begin{proof}
Let $ \{ (x^{k+1}, y^{k+1}) \}_K $ be a subsequence converging to $ (\bar x, \bar y) $. Using the derivative formula of the distance function from \eqref{Eq:Derivative-s} together with the chain rule, we have, by construction,
\begin{eqnarray*}
	\lefteqn{\big\| \nabla_x q_{\tau_k} (x^{k+1}, y^{k+1}) \big\|} \\
    & = & \Big\| \nabla f(x^{k+1}) + \tau_k \Big[ G'(x^{k+1})^* \big[ G(x^{k+1}) - P_C \big( G(x^{k+1}) \big) \big] + x^{k+1} - y^{k+1} \Big] \Big\| \\
    & \leq & \delta_k
\end{eqnarray*}
and 
\begin{equation*}
	0 \in \nabla_y q_{\tau_k} \big(x^{k+1},y^{k+1}\big) + \mathcal{N}_D^{\lim} (y^{k+1})  = \tau_k \big( y^{k+1} - x^{k+1} \big) + \mathcal{N}_D^{\lim} (y^{k+1})
\end{equation*}
for all $ k \in \mathbb N $. Dividing the first equation by $ \tau_k $ and exploiting the cone property in the second inclusion yields 
\begin{equation*}
	\Big\| \frac{1}{\tau_k} \nabla f(x^{k+1}) + G'(x^{k+1})^* \big[ G(x^{k+1}) - P_C \big( G(x^{k+1}) \big) \big] + x^{k+1} - y^{k+1} \Big\| \leq \frac{\delta_k}{\tau_k}
\end{equation*}
and 
\begin{equation*}
	0 \in y^{k+1} - x^{k+1} + \mathcal{N}_D^{\lim} (y^{k+1})
\end{equation*}
for all $ k \in \mathbb{N} $, cf.\ \cite[Thm.\ 6.12]{RockafellarWets2009}. Taking the limit $ k \to_K \infty $, using the continuity of $ \nabla f, G, G', P_C $, and the robustness property \eqref{Eq:robustness} of the limiting normal cone, we obtain
\begin{equation*}
	G'( \bar x)^* \big( G( \bar x ) - P_C \big( G( \bar x) \big)  \big) + \bar x - \bar y = 0
	\quad \text{and} \quad 
	0 \in \bar y - \bar x + \mathcal{N}_D^{\lim} (\bar y).
\end{equation*}
This shows that $ ( \bar x, \bar y ) $ is an M-stationary point of \eqref{Eq:FeasProblem}.
\end{proof}

\noindent 
Recall that Algorithm~\ref{Alg:PenDecomp} automatically generates iterates $ y^k $ which belong to the set $ D $. The objective function in \eqref{Eq:FeasProblem} therefore only measures the violation of the constraints $ G(x) \in C $ and $ x - y = 0 $, which is included in the penalty term of $ q_{\tau} $, i.e., \eqref{Eq:FeasProblem} is a feasibility problem of the decomposed problem \eqref{P-Decomposed}. If $ \bar x = \bar y $, then $ \bar x $ turns out to be an M-stationary point of
\begin{equation*}
	\min_x \ \frac{1}{2} \text{dist}_C^2 \big( G(x) \big) 
	\quad \text{s.t.} \quad x \in D,
\end{equation*}
which is the feasibility problem of the original problem \eqref{P}. Though Proposition~\ref{Prop:Feasibility} obviously does not guarantee that an accumulation point is feasible (either for the original or the decomposed formulation), it guarantees at least a stationarity property, which is the best one can expect in general. Moreover, if the feasible set of \eqref{P} is nonempty and the function $ \frac{1}{2} \text{dist}_C^2 \big( G(x) \big) $ of \eqref{Eq:FeasProblem} is convex, then every M-stationary point is a global minimum and, hence, a feasible point of \eqref{P} or \eqref{P-Decomposed}. Note that the square of the above distance function is automatically convex if the constraint $ G(x) \in C $ satisfies standard conditions which imply that this set is convex by itself.

Moreover, one can also define an extended Robinson-type constraint qualification (which boils down to the extended MFCQ condition for standard nonlinear programs) which automatically imply that accumulation points of a sequence generated by Algorithm~\ref{Alg:PenDecomp} are feasible, cf.\ \cite{Boergens2019,Kanzow2017} for further details.

Hence, under reasonable assumptions, we can guarantee that accumulation points are automatically feasible for \eqref{P} or \eqref{P-Decomposed}, whereas, in general, they are at least M-stationary points. The following global convergence result therefore assumes that we have a feasible accumulation point and shows that this one is automatically AM-stationary for problem \eqref{P-Decomposed}. 

\begin{theorem}\label{Thm:Convergence}
Let $ \{ (x^k,y^k) \} $ be a sequence generated by Algorithm~\ref{Alg:PenDecomp} with $ \{ \delta_k \} \to 0 $ and $ \{ \tau_k \} \to \infty $, and let $ (\bar x , \bar y) $ be a feasible accumulation point of this sequence. Then $ ( \bar x, \bar y) $ is an AM-stationary point of the optimization problem \eqref{P-Decomposed}.
\end{theorem}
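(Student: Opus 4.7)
The plan is to read the required AM-stationarity certificate for \eqref{P-Decomposed} directly off the inexact optimality conditions that Algorithm~\ref{Alg:PenDecomp} enforces at each iteration, using exactly the explicit reformulation \eqref{Eq:TwoInclusions}. Concretely, I would start from the two conditions \eqref{Eq:Innerk-1} and \eqref{Eq:Innerk-2}, write out the gradient of $q_{\tau_k}$ in $x$ via the formula \eqref{Eq:Derivative-s} and the chain rule, and use \eqref{Eq:Innerk-2} together with the sum rule for the limiting normal cone (all of $\mathcal{N}_D^{\lim}$ is attached to $y$, since $q_{\tau_k}$ is smooth in $y$) to obtain $0 \in \tau_k(y^{k+1}-x^{k+1}) + \mathcal{N}_D^{\lim}(y^{k+1})$.

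Next I would define the natural candidate multipliers and slacks for the decomposed problem:
\begin{equation*}
	\lambda^{k+1} := \tau_k\bigl[G(x^{k+1}) - P_C(G(x^{k+1}))\bigr], \quad
	\mu^{k+1} := \tau_k(x^{k+1}-y^{k+1}), \quad
	z^{k+1} := G(x^{k+1}) - P_C(G(x^{k+1})).
\end{equation*}
The relation \eqref{Eq:RelNProj}, combined with the cone property of $\mathcal{N}_C$, immediately gives $\lambda^{k+1} \in \mathcal{N}_C(G(x^{k+1}) - z^{k+1})$, which is the second inclusion in \eqref{Eq:TwoInclusions}. From the $y$-optimality we get $\mu^{k+1} \in \mathcal{N}_D^{\lim}(y^{k+1})$, so in particular $0 \in -\mu^{k+1} + \mathcal{N}_D^{\lim}(y^{k+1})$. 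This lets me set the error vector $\tilde\varepsilon^{k+1} := \bigl(\nabla f(x^{k+1}) + G'(x^{k+1})^*\lambda^{k+1} + \mu^{k+1},\; 0\bigr)$, whose first block has norm at most $\delta_k$ by \eqref{Eq:Innerk-1} and whose second block lies, trivially, in the required set. Both inclusions of \eqref{Eq:TwoInclusions} then hold at iteration $k+1$.

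Finally I would pass to the limit along the subsequence on which $(x^{k+1},y^{k+1}) \to (\bar x,\bar y)$. By hypothesis $\delta_k \to 0$, so $\tilde\varepsilon^{k+1} \to 0$. Feasibility of $\bar x$ is the decisive input: it forces $G(\bar x) \in C$, hence $P_C(G(x^{k+1})) \to G(\bar x)$ by continuity of $P_C$, and consequently $z^{k+1} \to 0$. Together with the inclusions just derived, this produces exactly the sequences demanded by the AM-stationarity definition for \eqref{P-Decomposed}.

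The only real subtlety—and what I expect to be the main thing to watch—is that feasibility is used only to drive $z^{k+1} \to 0$, not to control the sizes of $\lambda^{k+1}$ or $\mu^{k+1}$, which may well blow up like $\tau_k$. This is precisely why the AM-stationarity notion is the right target here (rather than M-stationarity): the definition does not require bounded multipliers, and this is what makes the argument go through without any constraint qualification. The rest is bookkeeping: verifying the cone-scaling step for $\lambda^{k+1}$, checking that the second block of $\tilde z^k$ is irrelevant as noted after \eqref{Eq:Normal-Decomposed}, and using the robustness property \eqref{Eq:robustness} only implicitly (it is not actually needed here, since AM-stationarity allows $x^k \to \bar x$ with normal-cone elements evaluated at $x^k$).
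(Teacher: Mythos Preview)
Your proposal is correct and follows essentially the same approach as the paper: you define the same multipliers $\lambda^{k+1}=\tau_k[G(x^{k+1})-P_C(G(x^{k+1}))]$, $\mu^{k+1}=\tau_k(x^{k+1}-y^{k+1})$, slack $z^{k+1}=G(x^{k+1})-P_C(G(x^{k+1}))$, and error $\tilde\varepsilon^{k+1}$, verify the two inclusions in \eqref{Eq:TwoInclusions} via \eqref{Eq:RelNProj} and the $y$-optimality condition, and then use $\delta_k\to 0$ and feasibility of $\bar x$ (for $z^{k+1}\to 0$) to conclude. Your remark that the robustness property \eqref{Eq:robustness} is not actually needed here is also accurate; the paper's own proof of this theorem does not invoke it either.
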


\begin{proof}
Let $ \{ (x^{k+1}, y^{k+1}) \}_K $ be a subsequence converging to $ (\bar x, \bar y) $. Recall that $ ( \bar x, \bar y ) $ is feasible, hence $ G( \bar x ) \in C $ and $ \bar x = \bar y \in D $. We further define the sequences 
\begin{eqnarray*}
	z^{k+1} & := & G (x^{k+1}) - P_C \big( G(x^{k+1}) \big), \\
	\lambda^{k+1} & := & \tau_k \big( G (x^{k+1}) - P_C  \big( G(x^{k+1}) \big)  \big), \\
	\mu^{k+1} & := & \tau_k \big( x^{k+1} - y^{k+1} \big), \\
	\varepsilon^{k+1} & := & \nabla f(x^{k+1}) + \tau_k \big[ G'(x^{k+1})^* \big( G(x^{k+1}) - P_C (G(x^{k+1})) \big) + x^{k+1} - y^{k+1} \big].
\end{eqnarray*}
Then setting 
\begin{equation*}
	\tilde x^{k+1} := \begin{pmatrix} 
	x^{k+1} \\ y^{k+1}
    \end{pmatrix}, \quad
    \tilde \varepsilon^{k+1} := \begin{pmatrix}
    \varepsilon^{k+1} \\ 0
    \end{pmatrix}, \quad
    \tilde \lambda^{k+1} := \begin{pmatrix}
    \lambda^{k+1} \\ \mu^{k+1}
    \end{pmatrix}, \quad 
    \tilde z^{k+1} := \begin{pmatrix}
    z^{k+1} \\ 0
    \end{pmatrix},
\end{equation*}
we claim that the corresponding four (sub-) sequences $ \{ \tilde x^{k+1} \}_K = \{ (x^{k+1}, y^{k+1}) \}_K $, $ \{ \tilde z^{k+1} \} = \{ ( z^{k+1}, 0 ) \}_K $, $ \{ \tilde \varepsilon^{k+1} \}_K = \{ ( \varepsilon^{k+1}, 0 ) \}_K $, and $ \{ \tilde \lambda^{k+1} \}_K = \{ ( \lambda^{k+1}, \mu^{k+1} ) \}_K $ satisfy the properties of an AM-stationary point for problem \eqref{P-Decomposed} as stated at the end
of Section~\ref{Sec:Prelims}, cf.\ \eqref{Eq:Normal-Decomposed} and \eqref{Eq:TwoInclusions}. First of all, $ (\bar x, \bar y) $ is feasible and $ \{ ( x^{k+1}, y^{k+1} ) \}_K \to ( \bar x, \bar y ) $ by assumption. Furthermore, by definition of $ \varepsilon^{k+1} $ and the construction of Algorithm~\ref{Alg:PenDecomp}, we also have 
\begin{equation*}
	\| \varepsilon^{k+1} \| = \| \nabla_x q_{\tau_k} (x^{k+1}, y^{k+1}) \| \leq \delta_{k} \to 0 .
\end{equation*}
This obviously implies $ \| \tilde \varepsilon^{k+1} \| \to 0 $. Furthermore, the definitions of $ \lambda^{k+1} $ and $ \mu^{k+1} $ together with $ 0 \in \tau_k \big( y^{k+1} - x^{k+1} \big) + \mathcal{N}_D^{\lim} (y^{k+1}) $ yield
\begin{equation*}
	\varepsilon^{k+1} = \nabla f(x^{k+1}) + G'(x^{k+1})^* \lambda^{k+1} + \mu^{k+1} \quad \text{and} \quad 0 \in - \mu^{k+1} + \mathcal{N}_D^{\lim} (y^{k+1}),
\end{equation*}
hence the first inclusion in \eqref{Eq:TwoInclusions} holds. To verify the second inclusion, we only have to take a closer look at the first block. By definition of $ \lambda^{k+1} $ and the relation \eqref{Eq:RelNProj} between the projection and the normal cone, we get
\begin{equation*}
	\lambda^{k+1} = \tau_k \big( G (x^{k+1}) - P_C  \big( G(x^{k+1}) \big)  \big) \in \mathcal{N}_C \big( P_C (G(x^{k+1}))\big) =  \mathcal{N}_C \big( G(x^{k+1}) - z^{k+1} \big),
\end{equation*}
where the last identity comes from the definition of $ z^{k+1} $. Finally, we also have $ \tilde z^{k+1} \to_K 0 $ since $ z^{k+1} $ satisfies
\begin{equation*}
	z^{k+1} = G (x^{k+1}) - P_C \big( G(x^{k+1}) \big) \to_K G(\bar x) - P_C \big( G (\bar x) \big) = 0
\end{equation*}
by the continuity of $ G $ and the projection operator $ P_C $ as well as the feasibility of $ \bar x $. Altogether, this shows that $ (\bar x, \bar y) $ is an AM-stationary point of the program \eqref{P-Decomposed}.
\end{proof}

\noindent
Using Theorem~\ref{Thm:Convergence} together with the counterpart of Theorem~\ref{Thm:AM-Main-Theorem}
for \eqref{P-Decomposed}
and the fact that the M-stationarity conditions for the two problems \eqref{P} and \eqref{P-Decomposed}
are equivalent, we directly obtain the following result.

\begin{theorem}\label{Thm:GlobConvMStat}
Let $ \{ (x^k,y^k) \} $ be a sequence generated by Algorithm~\ref{Alg:PenDecomp} with $ \{ \delta_k \} \to 0 $ and $ \{ \tau_k \} \to \infty $, and let $ (\bar x , \bar y) $ be a feasible accumulation point of this sequence satisfying AM-regularity for \eqref{P-Decomposed}. Then $ ( \bar x, \bar y) $ is an M-stationary point of the optimization problem \eqref{P-Decomposed}, and $ \bar x $ itself is an M-stationary point of
the original problem \eqref{P}.
\end{theorem}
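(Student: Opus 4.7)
The plan is to assemble this result as a short corollary of three ingredients already in place, without any new analytical work. First, I would invoke Theorem~\ref{Thm:Convergence} directly: since $\{\delta_k\}\to 0$, $\{\tau_k\}\to\infty$, and $(\bar x,\bar y)$ is a feasible accumulation point of $\{(x^k,y^k)\}$, that theorem yields that $(\bar x,\bar y)$ is an AM-stationary point of the decomposed problem~\eqref{P-Decomposed}.

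Second, I would apply the counterpart of Theorem~\ref{Thm:AM-Main-Theorem}(b) for the reformulation~\eqref{P-Decomposed}. That statement, valid under the identifications $\tilde x=(x,y)$, $\tilde f$, $\tilde G$, $\tilde C$, $\tilde D$ recorded earlier in the paper, asserts that any AM-stationary point for~\eqref{P-Decomposed} which additionally satisfies AM-regularity for~\eqref{P-Decomposed} is in fact M-stationary for~\eqref{P-Decomposed}. Combining this with the AM-regularity hypothesis on $(\bar x,\bar y)$ and the AM-stationarity obtained in the previous step, we get M-stationarity of $(\bar x,\bar y)$ for~\eqref{P-Decomposed}.

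Third, I would pass from the decomposed to the original problem by invoking the fact (already noted right before the displayed condition~\eqref{Eq:Normal-Decomposed}) that M-stationarity for~\eqref{P-Decomposed} is equivalent to M-stationarity for~\eqref{P}. Concretely, since $(\bar x,\bar y)$ is feasible we have $\bar x=\bar y\in D$ and $G(\bar x)\in C$; the Cartesian product rule for limiting normal cones applied to $\tilde D=\mathbb X\times D$ and $\tilde C=C\times\{0_{\mathbb X}\}$, together with the elimination of the multiplier $\mu$ associated with the linking constraint $x-y=0$, reduces the M-stationarity system of~\eqref{P-Decomposed} to the one of~\eqref{P} at $\bar x$, yielding the second conclusion.

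The main obstacle is conceptual rather than technical: one has to be careful that AM-regularity and other constraint qualifications depend on the explicit constraint representation, so the AM-regularity assumption is stated for~\eqref{P-Decomposed} and not for~\eqref{P}, and the theorem only transfers the \emph{M-stationarity} of the resulting limit point back to the original formulation. Once that distinction is observed, the proof is simply the chain AM-stationary $\Rightarrow$ M-stationary for~\eqref{P-Decomposed} $\Rightarrow$ M-stationary for~\eqref{P}, and no further calculation is needed.
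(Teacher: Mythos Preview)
Your proposal is correct and follows exactly the same approach as the paper: the theorem is stated there as a direct corollary, with the sentence preceding it citing Theorem~\ref{Thm:Convergence}, the counterpart of Theorem~\ref{Thm:AM-Main-Theorem}(b) for~\eqref{P-Decomposed}, and the equivalence of M-stationarity for~\eqref{P} and~\eqref{P-Decomposed}. Your added caveat about AM-regularity being formulation-dependent is also consistent with the paper's earlier remark on this point.
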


\section{Solution of Subproblems by Inexact Alternating
	 Minimization}\label{Sec:Subproblem}

The Penalty Decomposition approach basically consists in approximately solving the
sequence of penalty subproblems at step (S.2) by a two-block decomposition method. The alternating minimization loop can be stopped, at each iteration, as soon as an approximate stationary point of the penalty function w.r.t.\ the first block of variables $x$ is attained. The instructions of the (inexact) Alternating Minimization loop at a fixed iteration $k$ of the Penalty Decomposition method are detailed in Algorithm \ref{Alg:AM}. 

\begin{algorithm}\label{Alg:AM}
	(Inexact Alternating Minimization)
	\begin{itemize}
		\item[(S.0)] Given $ \delta_k \geq 0 $ and $ \tau_k > 0 $, a starting point $ (x^k,y^k) \in \mathbb X \times D $, $\gamma \in (0,1)$, $\beta \in (0,1)$,  set $\ell := 0$, $(u^0,v^0) = (x^{k},y^{k})$.
		\item[(S.1)] If $\big\| \nabla_x q_{\tau_k} (u^{\ell}, v^{\ell} ) \big\| \leq \delta_k$: STOP returning $(x^{k+1},y^{k+1}) = (u^\ell,v^\ell)$.
		\item[(S.2)] Choose a positive definite self-adjoint linear map $H_\ell$, set $d^\ell = -H_\ell(\nabla_x q_{\tau_k}(u^\ell,v^\ell))$ and compute \begin{equation}
			\label{eq:arm_acc}
			\alpha_\ell = \max_{j\in\mathbb{N}} \{ \beta^j: q_{\tau_k}(u^\ell+ \beta^j d^\ell, v^\ell ) \leq 
			q_{\tau_k}(u^\ell, v^\ell ) + \gamma\beta^j \langle \nabla_x q_{\tau_k}(u^\ell,v^\ell),d^\ell\rangle\}
		\end{equation}
		\item[(S.3)] Set $u^{\ell+1} = u^\ell +\alpha_\ell d_\ell$.
		\item[(S.4)] Compute $v^{\ell+1}\in \argmin_{v\in D}q_{\tau_k}(u^{\ell+1}, v)=\Pi_D(u^{\ell+1})$.
		\item[(S.5)] Set $\ell= \ell+1$ and go to (S.1).
	\end{itemize}
\end{algorithm} 

\noindent
As already pointed out, if we assume that projections onto the set $D$ are easily computable, the update of the second block of variables can be carried out exactly by \eqref{y-proj}. 

On the other hand, an exact $x$-update step may be prohibitive in most applications. For this reason, the $x$-variable is only updated by a descent step along a descent direction, with a step size selected by an Armijo-type line search. 

Note that the direction $d_\ell= -H_\ell(\nabla_x q_{\tau_k}(u^\ell,v^\ell))$ is certainly a descent direction, since $H_\ell$ is positive definite, $\nabla_xq_{\tau_k}(u^\ell,v^\ell)\neq 0$ and, thus,
\begin{equation}
	\label{eq:descent_dir}
	\langle \nabla_xq_{\tau_k}(u^\ell,v^\ell),d^\ell\rangle=-\langle \nabla_xq_{\tau_k}(u^\ell,v^\ell), H_\ell (\nabla_xq_{\tau_k}(u^\ell,v^\ell)) \rangle<0.
\end{equation} 

The Armijo line search provides a sufficient decrease granting, under suitable assumptions on the sequence of maps $H_\ell$, the convergence of the entire alternate minimization scheme.

Note that by properly choosing $H_\ell$ we can retrieve the descent directions employed in most widely employed nonlinear optimization solvers. This point, which we will emphasize again later on, is particularly relevant from the computational point of view. 
%

Throughout this section, we make the following assumption.

\begin{assumption}
	\label{ass:bounded}
	$f(x)$ has bounded level sets upon $\mathbb{X}$, i.e., $\mathcal{L}_f(\eta) = \{x\in \mathbb{X}\mid f(x)\le \eta\}$ is bounded for any $\eta\in\mathbb{R}$.
\end{assumption}

We then begin by proving that, under Assumption \ref{ass:bounded}, the penalty function has bounded level sets for any nonnegative value of the penalty parameter $\tau$. 

\begin{lemma}
	\label{lemma:bounded}
	The penalty function $q_{\tau}(x,y)$ has bounded level sets for any $\tau\ge 0$.
\end{lemma}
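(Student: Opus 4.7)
The plan is direct: fix any $\eta \in \mathbb{R}$ and $\tau > 0$, and show that
\[
	\mathcal{L}_{q_\tau}(\eta) := \{(x,y) \in \mathbb{X} \times \mathbb{X} \mid q_\tau(x,y) \leq \eta\}
\]
is bounded by controlling $x$ and $y$ separately. (The case $\tau = 0$ reduces to $q_0(x,y) = f(x)$, which has no mechanism for bounding $y$; we read the statement as pertaining to the values of $\tau$ actually occurring in Algorithm~\ref{Alg:PenDecomp}, i.e., $\tau > 0$.)

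First I would bound the $x$-component. Because both $\tfrac{\tau}{2}\|x-y\|^2$ and $\tfrac{\tau}{2}\text{dist}_C^2(G(x))$ are nonnegative, any $(x,y) \in \mathcal{L}_{q_\tau}(\eta)$ satisfies $f(x) \leq q_\tau(x,y) \leq \eta$, so $x \in \mathcal{L}_f(\eta)$. By Assumption~\ref{ass:bounded}, this set is bounded, and since $f$ is continuous, it is also closed, hence compact. In particular $f$ attains a finite minimum $m := \min_{x \in \mathcal{L}_f(\eta)} f(x)$ on it.

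Next I would bound the $y$-component. Using $f(x) \geq m$ and dropping the distance term, any $(x,y) \in \mathcal{L}_{q_\tau}(\eta)$ satisfies
\[
	\frac{\tau}{2}\|x-y\|^2 \leq \eta - f(x) - \frac{\tau}{2}\text{dist}_C^2(G(x)) \leq \eta - m,
\]
so $\|x-y\| \leq \sqrt{2(\eta - m)/\tau}$. Combining this with the uniform bound on $\|x\|$ from the previous step gives a uniform bound on $\|y\|$ via the triangle inequality. Hence $\mathcal{L}_{q_\tau}(\eta)$ is bounded, which is the claim.

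No real obstacle is anticipated; the argument is a routine combination of Assumption~\ref{ass:bounded} with the coercive quadratic coupling $\|x - y\|^2$ provided by the penalty. The only subtle point worth stating explicitly is that the lower bound $m$ on $f$ over $\mathcal{L}_f(\eta)$ relies on compactness (boundedness plus closedness from continuity of $f$), rather than on any a priori global lower bound on $f$.
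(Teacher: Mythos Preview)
Your argument is correct and follows essentially the same two-step idea as the paper's proof: first bound $x$ via $f(x)\le q_\tau(x,y)\le\eta$ and Assumption~\ref{ass:bounded}, then bound $y$ via the coupling term $\tfrac{\tau}{2}\|x-y\|^2$ together with a lower bound on $f$ over $\mathcal{L}_f(\eta)$. The paper packages this as a proof by contradiction with an unbounded sequence, whereas you give a direct quantitative bound; your observation that the case $\tau=0$ cannot work as stated is also correct (the paper's formulation is slightly imprecise there).
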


\begin{proof}
	Consider any $\eta\in\mathbb{R}$. From Assumption \ref{ass:bounded}, the level set $\mathcal{L}_f(\eta)$ is bounded. Let us consider $\mathcal{L}_{q_\tau}(\eta)$ for any $\tau\ge0$. 
	
	Assume by contradiction that $\mathcal{L}_{q_\tau}(\eta)$ is not bounded, i.e., there exists $\{(x^t,y^t)\}$ such that $(x^t,y^t)\in\mathcal{L}_{q_\tau}(\eta)$ for all $t$ and $\|(x^t,y^t)\|\to \infty$. Then, either $\|x^t\|\to \infty$ or $\|y^t\|\to \infty$. 
	
	If $\|x^t\|\to \infty$, we have $f(x^t)>\eta$ for $t$ sufficiently large, being $\mathcal{L}_f(\eta)$ bounded. But then, from the definition of $q_{\tau}(x,y)$, we have for $t$ sufficiently large $q_{\tau}(x^{t},y^t)\ge f(x^t)>\eta$, which contradicts $\{(x^t,y^t)\}\subseteq\mathcal{L}_{q_\tau}(\eta)$.
	
	Thus, $\|y^t\|\to \infty$ while $\|x^t\|$ stays bounded. However, $$
	   q_{\tau}(x^t,y^t) = f(x^t) + \frac{\tau}{2}\left(\|x^t-y^t\|^2 + \text{dist}_C^2 \big( G(x) \big) \right)>\eta
	$$ 
	for $t$ sufficiently large, as $\|x^t-y^t\|^2\to \infty $, $ \text{dist}_C^2 \big( G(x) \big) \geq 0 $ and $f$ is bounded having compact level sets. This again is a contradiction, which completes the proof.
\end{proof}

\noindent
It can be easily seen that step (S.2) of Algorithm \ref{Alg:AM} is well-defined, i.e.,
there exists a finite integer $j$ such that $\beta^j$ satisfies the acceptability condition \eqref{eq:arm_acc}. Moreover the following result can be readily obtained by standard results in nonlinear optimization \cite{Bertsekas1997}.

\begin{lemma}
	\label{lemma:armijo}
	Let $\{ ( u^\ell,v^\ell ) \}$ be the sequence generated by Algorithm \ref{Alg:AM}. Let $T\subseteq \{0,1,2,\ldots \}$ be an infinite subset such that
	$$\lim\limits_{\substack{\ell\to\infty\\\ell\in T}}(u^\ell,v^\ell) = (\bar u,\bar v).$$ Let $\{d^\ell\}$ be a sequence of directions such that $\langle \nabla_x q_{\tau_k}(u^\ell,v^\ell),d^\ell\rangle<0$ and assume that $\|d^\ell\|\le M$  for some $M>0$ and for all $\ell\in T$. If, for any fixed (outer iteration) $ k $,  the following equation holds
	\begin{equation*}
	\lim\limits_{\substack{\ell \to \infty\\\ell\in T}} q_{\tau_k}(u^\ell, v^\ell)-q_{\tau_k}(u^\ell+ \alpha_\ell d^\ell, v^\ell )= 0,
	\end{equation*}
	then we have
	\begin{equation*}
	\lim\limits_{\substack{\ell \to \infty\\\ell\in T}} \langle \nabla_x q_{\tau_k}(u^\ell,v^\ell),d^\ell\rangle = 0.
	\end{equation*}
\end{lemma}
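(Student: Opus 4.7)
The plan is to argue by contradiction. Suppose $\langle \nabla_x q_{\tau_k}(u^\ell, v^\ell), d^\ell\rangle$ does not tend to zero along $T$. Then, after passing to a further subsequence $T' \subseteq T$, there is some $\eta > 0$ such that $\langle \nabla_x q_{\tau_k}(u^\ell, v^\ell), d^\ell\rangle \le -\eta$ for every $\ell \in T'$. Combining this lower bound with the Armijo acceptance test \eqref{eq:arm_acc} gives
\begin{equation*}
    q_{\tau_k}(u^\ell, v^\ell) - q_{\tau_k}(u^\ell + \alpha_\ell d^\ell, v^\ell) \ \ge\ -\gamma \alpha_\ell \langle \nabla_x q_{\tau_k}(u^\ell, v^\ell), d^\ell\rangle \ \ge\ \gamma \alpha_\ell \eta
\end{equation*}
for all $\ell \in T'$. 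By hypothesis the left-hand side vanishes along $T$, so necessarily $\alpha_\ell \to 0$ along $T'$.

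Once $\alpha_\ell < 1$, the definition of $\alpha_\ell$ as the largest element of $\{\beta^j\}$ satisfying \eqref{eq:arm_acc} implies that the enlarged step $\alpha_\ell/\beta$ was rejected, i.e.,
\begin{equation*}
    q_{\tau_k}(u^\ell + (\alpha_\ell/\beta) d^\ell, v^\ell) \ >\ q_{\tau_k}(u^\ell, v^\ell) + \gamma(\alpha_\ell/\beta)\langle \nabla_x q_{\tau_k}(u^\ell, v^\ell), d^\ell\rangle.
\end{equation*}
Applying the mean value theorem to $t \mapsto q_{\tau_k}(u^\ell + t d^\ell, v^\ell)$ on $[0, \alpha_\ell/\beta]$ yields some $\theta_\ell \in (0,1)$ with
\begin{equation*}
    \langle \nabla_x q_{\tau_k}(u^\ell + \theta_\ell(\alpha_\ell/\beta) d^\ell, v^\ell), d^\ell\rangle \ >\ \gamma \langle \nabla_x q_{\tau_k}(u^\ell, v^\ell), d^\ell\rangle.
\end{equation*}

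Since $\|d^\ell\| \le M$, I extract a further sub-subsequence along which $d^\ell \to \bar d$. Using $\alpha_\ell/\beta \to 0$, $u^\ell \to \bar u$, $v^\ell \to \bar v$, the boundedness of $\{d^\ell\}$, and the continuity of $\nabla_x q_{\tau_k}$, I pass to the limit in the preceding inequality to obtain $\langle \nabla_x q_{\tau_k}(\bar u, \bar v), \bar d\rangle \ge \gamma \langle \nabla_x q_{\tau_k}(\bar u, \bar v), \bar d\rangle$. Because $\gamma \in (0,1)$ this forces $\langle \nabla_x q_{\tau_k}(\bar u, \bar v), \bar d\rangle \ge 0$. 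On the other hand, taking the limit in $\langle \nabla_x q_{\tau_k}(u^\ell, v^\ell), d^\ell\rangle \le -\eta$ along the same sub-subsequence gives $\langle \nabla_x q_{\tau_k}(\bar u, \bar v), \bar d\rangle \le -\eta < 0$, a contradiction.

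I do not foresee a serious obstacle: the argument is the classical Armijo-subsequence result, and the only technical prerequisites are the continuous differentiability of $q_{\tau_k}(\cdot, v)$ (guaranteed by that of $f$, $G$, and the squared distance function $s_C$, cf.\ \eqref{Eq:Derivative-s}) and the boundedness of $\{d^\ell\}$, which is given. The only care needed is the double subsequence extraction, first to produce the bound $\langle \nabla_x q_{\tau_k}(u^\ell,v^\ell), d^\ell\rangle \le -\eta$ and then to select a limit $\bar d$ of the bounded sequence of directions before invoking continuity.
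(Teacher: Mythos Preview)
Your proposal is correct and follows essentially the same approach as the paper's proof: both argue by contradiction, use the Armijo condition to force $\alpha_\ell\to 0$ along a subsequence where the directional derivative is bounded away from zero, exploit the failure of the step $\alpha_\ell/\beta$ together with the mean value theorem, and pass to the limit (after extracting a convergent subsequence of $\{d^\ell\}$) to obtain a contradiction with $\gamma\in(0,1)$. The only cosmetic difference is that the paper first records $\gamma\alpha_\ell\langle\nabla_x q_{\tau_k}(u^\ell,v^\ell),d^\ell\rangle\to 0$ before introducing the contradiction hypothesis, whereas you combine these two steps directly.
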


\begin{proof}
	Since, for any $\ell$, $\alpha_\ell$ is chosen according to \eqref{eq:arm_acc}, we have
	\begin{equation*}
	q_{\tau_k}(u^{\ell+1},v^\ell) \le q_{\tau_k}(u^\ell,v^\ell) + \gamma \alpha_\ell \langle \nabla_x q_{\tau_k}(u^\ell,v^\ell),d^\ell\rangle.
	\end{equation*}
	Taking the limits for $\ell\in T$, $\ell\to \infty$,
	we get
	$$\lim\limits_{\substack{\ell \to \infty\\\ell\in T}} q_{\tau_k}(u^\ell+ \alpha_\ell d^\ell, v^\ell ) - q_{\tau_k}(u^\ell, v^\ell) \le \lim_{\substack{\ell\to\infty\\\ell\in T}}\gamma \alpha_\ell\langle \nabla_x q_{\tau_k}(u^\ell,v^\ell),d^\ell\rangle\le 0,$$
	where the last inequality comes from the fact that $\gamma>0$, $\alpha_\ell\ge 0$ and $\langle \nabla_x q_{\tau_k}(u^\ell,v^\ell),d^\ell\rangle<0$ by assumption.
	From the hypotheses, we also have that the leftmost limit goes to 0, hence we obtain
	\begin{equation}\label{Eq:ProdLimit}
		\lim_{\substack{\ell\to\infty\\\ell\in T}}\gamma \alpha_\ell\langle \nabla_x q_{\tau_k}(u^\ell,v^\ell),d^\ell\rangle= 0.
	\end{equation}
	Assume, by contradiction, that $\langle \nabla_x q_{\tau_k}(u^\ell,v^\ell),d^\ell\rangle_T $ does not 
	converge to zero. Subsequencing if necessary, we may
 	assume that $ \lim_{\ell\to_T \infty} \langle \nabla_x q_{\tau_k}(u^\ell,v^\ell),d^\ell\rangle = - \nu $
 	for some number $ \nu > 0 $. On the other hand, we have
 	$ (u^\ell,v^\ell)\to_T(\bar{u},\bar{v})$ by assumption,
 	and $ \{ d^{\ell} \} $ is bounded, so we may also assume 
 	that $ \{ d^{\ell} \}_T \to \bar d $ for some limit point
 	$ \bar d $. Altogether, we then have 
	$$
		\langle \nabla_x q_{\tau_k}(\bar{u},\bar{v}),\bar{d}\rangle=\lim_{\substack{\ell\to\infty\\\ell\in T}} \langle \nabla_x q_{\tau_k}(u^\ell,v^\ell),d^\ell\rangle = - \nu <0.
	$$
	Exploiting \eqref{Eq:ProdLimit}, we see that
	$\alpha_\ell\to_{T} 0$ holds. Consequently, for all $\ell\in T$ sufficiently large, we have $\alpha_\ell<\beta^0=1$ and thus
	$$
		q_{\tau_k}\left(u^\ell+\frac{\alpha_\ell}{\beta}d^\ell,v^\ell\right)>q_{\tau_k}(u^\ell,v^\ell)+\gamma \frac{\alpha_\ell}{\beta}\langle\nabla_x q_{\tau_k}(u^\ell,v^\ell),d^\ell\rangle.
	$$
	By the mean-value theorem, we can write
	$$
		q_{\tau_k}\left(u^\ell+\frac{\alpha_\ell}{\beta}d^\ell,v^\ell\right) = q_{\tau_k}(u^\ell,v^\ell) + \frac{\alpha_\ell}{\beta}\langle \nabla_x q_{\tau_k}(z^\ell,v^\ell) ,d^\ell\rangle
	$$
	for some $z^\ell=u^\ell+\theta_\ell \frac{\alpha_\ell}{\beta} d^\ell$, $\theta_\ell\in(0,1)$. Subtracting the last two relations and dividing by $\alpha_\ell/\beta$, we get
	$$0 > \gamma\langle \nabla_x q_{\tau_k}(u^\ell,v^\ell) ,d^\ell \rangle -\langle\nabla_x q_{\tau_k}(z^\ell,v^\ell) ,d^\ell\rangle.$$
	On the other hand, 
	$$
		\lim_{\substack{\ell\in T\\\ell\to \infty}}z^\ell = \lim_{\substack{\ell\in T\\\ell\to \infty}}u^\ell + \theta_\ell\frac{\alpha_\ell}{\beta}d^\ell = \bar{u}
	$$
	since $\alpha_\ell\to_{T}0$ and $d^\ell\to \bar{d}$.	
	Taking the limits in the previous inequality, we finally get
	$$\gamma \langle \nabla_x q_{\tau_k}(\bar{u},\bar{v}), \bar{d} \rangle\le \langle \nabla_x q_{\tau_k}(\bar{u},\bar{v}), \bar{d} \rangle,$$
	which is absurd since $\gamma \in (0,1)$ and $\langle \nabla_x q_{\tau_k}(\bar{u},\bar{v}), \bar{d} \rangle=-\nu<0$. 
\end{proof}

\noindent
In order to ensure that the sequence generated by the Alternating Minimization scheme properly converges, we need the sequence of directions $\{d_\ell\}$ to satisfy suitable properties. Here, in particular, we assume that the entire sequence of linear mappings $\{H_\ell\}$ satisfies the \textit{bounded eigenvalues} condition \cite[Sec.\ 1.2]{Bertsekas1997}:
\begin{equation}
	\label{bounded_eig}
	c_1\|z\|^2\le\langle z,H_\ell (z)\rangle\le c_2\|z\|^2\qquad \forall \,z\in \mathbb{X}.
\end{equation}
We are finally able to show that the inexact alternating minimization loop stops in a finite number of iterations providing a point $(x^{k+1},y^{k+1})$ satisfying conditions \eqref{Eq:Innerk-1}-\eqref{Eq:Innerk-2}. 

\begin{proposition}\label{wdef}
	Assume the sequence of linear maps $\{H_\ell\}$ in Algorithm \ref{Alg:AM} satisfies the bounded eigenvalues condition \eqref{bounded_eig}. Then the algorithm cannot cycle infinitely and determines in a finite number of iterations a point $(x^{k+1},y^{k+1})$ such that
	\begin{equation*}
	\big\| \nabla_x q_{\tau_k} (x^{k+1}, y^{k+1} ) \big\| \leq \delta_k
	\end{equation*}
	and
	\begin{equation*}
	y^{k+1} = \argmin_{y \in D} \ q_{\tau_k} (x^{k+1},y).
	\end{equation*}
\end{proposition}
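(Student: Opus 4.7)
The strategy is to argue by contradiction: assume Algorithm \ref{Alg:AM} never triggers the stopping criterion, so that
\[
\|\nabla_x q_{\tau_k}(u^\ell, v^\ell)\| > \delta_k
\]
for every $\ell$ (the statement is only meaningful for $\delta_k > 0$). I would derive from this the existence of a subsequence along which these gradient norms in fact tend to zero, producing the contradiction.

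The first step is to establish a two-block monotone descent. The Armijo rule \eqref{eq:arm_acc} combined with the strict descent property \eqref{eq:descent_dir} gives $q_{\tau_k}(u^{\ell+1},v^\ell) \le q_{\tau_k}(u^\ell,v^\ell)$, while (S.4) ensures $q_{\tau_k}(u^{\ell+1},v^{\ell+1}) \le q_{\tau_k}(u^{\ell+1},v^\ell)$ because $v^\ell \in D$. Hence $\{q_{\tau_k}(u^\ell,v^\ell)\}$ is monotonically nonincreasing, and the whole sequence $\{(u^\ell,v^\ell)\}$ lies in the level set $\mathcal{L}_{q_{\tau_k}}(q_{\tau_k}(u^0,v^0))$, which is bounded by Lemma \ref{lemma:bounded}. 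Continuity of $q_{\tau_k}$ on this bounded set provides a lower bound, so the monotone sequence of values converges; telescoping through the intermediate point $(u^{\ell+1},v^\ell)$ then yields
\[
q_{\tau_k}(u^\ell,v^\ell) - q_{\tau_k}(u^\ell + \alpha_\ell d^\ell, v^\ell) \longrightarrow 0.
\]

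I would then extract a subsequence $T$ along which $(u^\ell,v^\ell) \to (\bar u,\bar v)$, possible by boundedness. The upper bound in \eqref{bounded_eig}, together with continuity of $\nabla_x q_{\tau_k}$ on the bounded set, implies that $\{d^\ell\}_T$ is uniformly bounded, and \eqref{eq:descent_dir} provides the negativity required by Lemma \ref{lemma:armijo}. The lemma therefore delivers $\langle \nabla_x q_{\tau_k}(u^\ell,v^\ell), d^\ell\rangle \to_T 0$, while the lower bound in \eqref{bounded_eig} gives
\[
\langle \nabla_x q_{\tau_k}(u^\ell,v^\ell), d^\ell\rangle \le -c_1\|\nabla_x q_{\tau_k}(u^\ell,v^\ell)\|^2.
\]
Combining these two facts forces $\|\nabla_x q_{\tau_k}(u^\ell,v^\ell)\| \to_T 0$, contradicting $\|\nabla_x q_{\tau_k}(u^\ell,v^\ell)\| > \delta_k > 0$. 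At termination, \eqref{Eq:Innerk-1} is the stopping criterion itself, and \eqref{Eq:Innerk-2} holds for $y^{k+1} = v^\ell$ because $v^\ell$ was either produced by (S.4) at the previous inner step or, if $\ell = 0$, inherited from the outer iteration, where (S.2) already enforces the projection property.

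I expect the main technical point to be the correct bookkeeping of the monotone-descent argument across the two alternating updates. Lemma \ref{lemma:armijo} requires the purely $x$-direction difference $q_{\tau_k}(u^\ell,v^\ell) - q_{\tau_k}(u^\ell+\alpha_\ell d^\ell, v^\ell)$ to vanish, but the natural argument from monotone convergence only delivers $q_{\tau_k}(u^\ell,v^\ell) - q_{\tau_k}(u^{\ell+1},v^{\ell+1}) \to 0$. Inserting the intermediate iterate $(u^{\ell+1},v^\ell)$ and sandwiching between the two one-sided descent inequalities is what converts this into the exact form required by the lemma.
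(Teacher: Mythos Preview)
Your proposal is correct and follows essentially the same route as the paper's proof: contradiction, monotone descent through the sandwich inequality $q_{\tau_k}(u^{\ell+1},v^{\ell+1}) \le q_{\tau_k}(u^{\ell+1},v^\ell) < q_{\tau_k}(u^\ell,v^\ell)$, boundedness via Lemma~\ref{lemma:bounded}, extraction of a convergent subsequence, boundedness of $\{d^\ell\}$ from the upper eigenvalue bound, application of Lemma~\ref{lemma:armijo}, and then the lower eigenvalue bound to force $\|\nabla_x q_{\tau_k}(u^\ell,v^\ell)\| \to 0$. Your explicit handling of the $\ell=0$ case for \eqref{Eq:Innerk-2} is slightly more careful than the paper's ``follows by construction''.
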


\begin{proof}
	Suppose, by contradiction that, for some values of $\tau_k$ and $\delta_k$, the sequence $\{ ( u^\ell, v^\ell ) \}$ is infinite. From the instructions of the algorithm, it is easy to see that
	we have
	\begin{equation*}
	q_{\tau_k}(u^{\ell+1}, v^{\ell+1}) \le q_{\tau_k}(u^0, v^0),
	\end{equation*}
	cf.\ \eqref{a1}.
	Hence, for all $\ell\ge 0$, the point $ ( u^\ell, v^\ell ) $
	belongs to the level set
	$$
	\{(u,v)\in \mathbb{X}\times \mathbb{X}\mid  \ q_{\tau_k}(u, v) \le q_{\tau_k}(u^0, v^0)\}.
	$$
	Lemma \ref{lemma:bounded} implies that this is a bounded set.
	Therefore, the sequence $\{ ( u^\ell, v^\ell ) \}$ admits cluster points.
	Let $K \subseteq \mathbb{N} $ be an infinite subset such that
	\begin{equation*}
	\lim_{\substack{\ell\to\infty\\\ell\in K}} (u^\ell, v^\ell) = (\bar{u}, \bar{v}).	
	\end{equation*}
	Recalling the continuity of the gradient, we have
	$$\lim_{\substack{\ell\to\infty\\\ell\in K}}\nabla_x q_{\tau_k}(u^\ell,v^\ell) = \nabla_x q_{\tau_k}(\bar{u},\bar{v}).$$
	We now show that
	$
	\nabla_x q_{\tau_k}(\bar{u},\bar{v})=0.
	$
	Taking
	into account  the instructions of the algorithm, we have
	\begin{equation}\label{a1}
	q_{\tau_k}(u^{\ell+1}, v^{\ell+1}) \le q_{\tau_k}(u^{\ell+1}, v^{\ell}) = q_{\tau_k}(u^{\ell} +\alpha_\ell d^\ell, v^\ell) < q_{\tau_k}(u^{\ell}, v^{\ell}).
	\end{equation}
	By \eqref{bounded_eig}, it can be easily seen that
	$$
		\|d^\ell\|^2\le c_2^2\|\nabla_x q_{\tau_k}(u^{\ell}, v^{\ell})\|^2.
	$$
	Since
	$\nabla_x q_{\tau_k}(u^{\ell}, v^{\ell}) \to_K \nabla_x q_{\tau_k}(\bar{u},\bar{v})$,
	we see that there exists a constant $ M > 0 $ such that
	$\|d^\ell\|\le M$ for all $\ell\in K$.

	Since the entire sequence $\{q_{\tau_k}(u^{\ell}, v^{\ell})\}$ is monotonically decreasing by \eqref{a1}, and the subsequence $\{ q_{\tau_k}(u,v) \}_K$ converges to $ q_{\tau_k}(\bar{u}, \bar{v}) $, it follows that the whole sequence of function values converges
	to this limit, i.e., we have
	$$
		\lim_{\ell\to\infty}{q_{\tau_k}(u^{\ell}, v^{\ell}) = q_{\tau_k}(\bar{u}, \bar{v})}.
	$$	
	Hence \eqref{a1} yields $\lim\limits_{\ell \to \infty} q_{\tau_k}(u^{\ell}, v^{\ell}) - q_{\tau_k}(u^{\ell} + \alpha_\ell d^\ell, v^\ell) = 0.$
	Thus, the hypotheses of Lemma~\ref{lemma:armijo} are satisfied. Moreover, from \eqref{eq:descent_dir} and \eqref{bounded_eig}, we have
	$$
		\langle \nabla_x q_{\tau_k}(u^\ell,v^\ell),d^\ell\rangle\le -c_1\|\nabla_x q_{\tau_k}(u^\ell,v^\ell)\|^2.
	$$
	Using Lemma~\ref{lemma:armijo}, we therefore obain
	\begin{equation*}
	0 = \lim_{\substack{ \ell \to \infty\\\ell \in K}} \langle \nabla_x q_{\tau_k}(u^\ell,v^\ell),d^\ell\rangle \le
	\lim_{\substack{\ell \to \infty\\\ell \in K}} -c_1\| \nabla_x q_{\tau_k}(u^\ell,v^\ell) \|^2 \le 0,
	\end{equation*} 
	which implies that, for $\ell \in K$ sufficiently large, we have
	$
	\| \nabla_x q_{\tau_k}(u^\ell,v^\ell) \|\le \delta_k,
	$
	i.e., that the stopping criterion of step (S.1) is satisfied in a finite number of iterations,
	and this contradicts the fact that $\{( u^\ell,v^\ell ) \}$ is an infinite sequence. Condition \eqref{Eq:Innerk-1} is then satisfied by the stopping criterion, whereas condition \eqref{Eq:Innerk-2} follows by construction.
\end{proof}

In order for the theoretical analysis to hold, we only need to ensure that $H_\ell$ satisfies condition \eqref{bounded_eig}. This assumption can be guaranteed a priori by different ways of defining $H_\ell$. Among these valid choices, we can find classical setups leading back to iterations of standard algorithmic schemes such as gradient method ($H_\ell=I$), Newton method ($H_\ell=\nabla_{xx}^2 q_{\tau_k}(u^\ell,v^\ell)$, provided $f$ is uniformly convex), quasi-Newton methods and limited-memory BFGS type methods.

This aspect is crucial in practice: we are allowed to employ the most efficient solvers for nonlinear optimization to carry out step (S.3) of the Alternate Minimization algorithm and thus speed up the computation of step (S.2) of Algorithm \ref{Alg:PenDecomp}, which is the most burdensome one. As a comparison, the Augmented Lagrangian algorithm from \cite{JiaKanzowMehlitzWachsmuth2021} has to resort to a gradient-based method to solve the (constrained) sequential subproblems, possibly resulting in an inefficient method 
especially for ill-conditioned problems. Another difference
is pointed out in the following comment.

\begin{remark}\label{Rem:ModSubsolver}
The Augmented Lagrangian algorithm from \cite{JiaKanzowMehlitzWachsmuth2021} has to compute 
projections onto the set $ D $ within the computation of the
stepsizes, i.e., it may require many projections for a single
(inner) iteration. This is a notable difference to our
Algorithm~\ref{Alg:AM}, which requires only a single
projection after the computation of the new iterate $ u^{\ell + 1} $.
In fact, it would also be possible to apply several iterations
of an unconstrained optimization solver to the subproblem 
of minimizing the penalty function $ q_{\tau_k} ( \cdot, v^{\ell}) $
before updating the $ v $-component, i.e., before using a single
projection step.
\end{remark}

\section{Particular Instances}
\label{sec:realizations}

The idea of this section, similar to \cite{JiaKanzowMehlitzWachsmuth2021}, is to present some difficult 
optimization problems where projections onto the complicated
set $ D $ can be carried out easily. This section does not contain
any proofs since the corresponding results are known from the 
literature. However, since these particular instances will be
used in our numerical section, they have to be discussed 
in some detail.

\subsection{The case of Sparsity Constraints}
\label{sec:sparsity}
A particular case of problem \eqref{P} is that of sparsity constrained optimization problems, i.e., optimization problems of the form
\begin{equation}
\label{eq:sparse_prob}
\begin{aligned}
\min_{x\in\mathbb{R}^n}\;&f(x)\\\text{s.t. }& G(x)\in C,\\&x\in D = \{x\mid\|x\|_0\le s\},
\end{aligned}
\end{equation}
where $s<n$ and $\|x\|_0$ denotes the zero pseudo-norm of $x$, i.e., the number of nonzero components of $x$. The Penalty Decomposition approach was originally proposed in \cite{Lu2013} for this class of problems, and the inexact version was then proposed for the case $\{x\mid G(x)\in C\}=\mathbb{R}^n$ \cite{Lapucci2021}.

In fact, from the analysis in Section \ref{Sec:Alg}, we can deduce that the convergence results continue to hold for the inexact version of the algorithm even in presence of additional constraints.  

The Penalty Decomposition method is particularly appealing, from a computational perspective, for this class of problems since the Euclidean projection onto the sparse set $D$ is easily obtainable in closed form, as outlined e.g.\ in \cite{Lu2013,Lapucci2021}. Let us denote the index set of the largest $s$ variables at $\bar{x}$ in absolute value by $\mathcal{G}_s(\bar{x})$; for simplicity, we furthermore assume that cases of tie are handled unambiguously. Then, the projection of $\bar{x}$ onto $D$ is given by 
\begin{equation}
\label{eq:proj_sparse} (\Pi_D(\bar{x}))_i = \begin{cases}
\bar{x}_i &\text{if } i\in \mathcal{G}_s(\bar{x}),\\0&\text{otherwise.}
\end{cases}
\end{equation}
In other words, the projection can be simply computed by setting to zero the $n-s$ smallest components of $\bar{x}$

Note that M-stationarity, as defined in Definition~\ref{Def:MStat},  coincides with Lu-Zhang stationarity \cite{Lapucci2022}, which is the property guaranteed to hold for cluster points obtained by the original Penalty Decomposition method \cite{Lu2013}. Hence, we can conclude, from the results shown in Section~\ref{Sec:Alg}, that the inexact Penalty Decomposition method has the same convergence properties as its exact counterpart, and that the M-stationarity
concept includes a corresponding stationarity condition
particularly designed for cardinality constrained problems.
We note, however, that there exist further stationarity
concepts in this setting, see the corresponding discussions in, e.g., \cite{BeckEldar2013,Lu2013,Lapucci2022,Lammel2022}.

\subsection{Low-Rank Approximation Problems}
\label{sec:rank}

Here we consider the space $\mathbb{X}=\mathbb{R}^{m\times n}$ with given $n,m\in\mathbb{N}$, $n,m\ge 2$; equipped with the standard Frobenius inner product, this is a Euclidean space. 

In applications like computer vision, machine learning, computer algebra or signal processing, there is a strong interest in low-rank matrix optimization problems, see, e.g., \cite{Markovsky2012,Ben2001,Burer2002,Candes2009,Recht2010}. Specifically, letting $q=\min(m,n)$ and given $\kappa\le q-1$, we are interested in problems of the form
\begin{equation}
	\label{eq:prob_rank}
	\begin{aligned}
	\min_{X\in\mathbb{R}^{m\times n}}\;&f(X)\\\text{s.t. }& G(X)\in C,\\&X\in D = \{X\mid \text{rank}(X)\le \kappa\}.
	\end{aligned}
\end{equation}

The set $D$ has been thoroughly analyzed from a geometrical point of view, see e.g.\ \cite{Hosseini2019} for a formula for $\mathcal{N}_D^\text{lim}(X)$. Interestingly, elements of $\Pi_D(X)$ can be easily constructed exploiting the singular value decomposition of $X$ \cite{Markovsky2012,Zhang2011}.

\begin{proposition}
	Let $X\in\mathbb{X}=\mathbb{R}^{m\times n}$ and let $X=U\Sigma V^T$ its singular value decomposition, with orthogonal matrices $U\in\mathbb{R}^{m\times m}$, $V\in\mathbb{R}^{n\times n}$ and $\Sigma\in\mathbb{R}^{m\times n}$ diagonal with entries in non-increasing order, i.e.,
	$$\Sigma_{ij}=\begin{cases}
	\sigma_i&\text{if } i=j,\\0&\text{otherwise},
	\end{cases}\qquad \sigma_i\ge \sigma_j\;\forall \, i\ge j,$$
	being $\sigma_1,\ldots,\sigma_q$ the singular values of $X$.
	Moreover, let $\hat{\Sigma}$ the matrix obtained setting to zero the $q-\kappa$ bottom-right elements of $\Sigma$, i.e.,
	$$\hat{\Sigma}_{ij} = \begin{cases}
	\sigma_i&\text{if } i=j\le\kappa,\\0&\text{otherwise}.
	\end{cases}$$
	Then, $\hat{X} = U\hat{\Sigma}V^T\in \Pi_D(X)$.
\end{proposition}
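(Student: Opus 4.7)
The statement is the classical Eckart--Young--Mirsky theorem for the Frobenius norm. My plan is to show directly that, among all matrices of rank at most $\kappa$, the truncated SVD $\hat{X}$ minimizes $\|X-Y\|_F^2$, where the Frobenius norm is the one induced by the inner product on $\mathbb{X}=\mathbb{R}^{m\times n}$ used to define $\Pi_D$.

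First I would compute the candidate value. Using unitary invariance of the Frobenius norm (i.e., $\|A\|_F = \|U^T A V\|_F$ for orthogonal $U,V$), one gets immediately
\[
\|X-\hat X\|_F^2 = \|\Sigma - \hat\Sigma\|_F^2 = \sum_{i=\kappa+1}^{q}\sigma_i^2,
\]
so it suffices to prove that $\|X-Y\|_F^2 \ge \sum_{i=\kappa+1}^{q}\sigma_i^2$ for every $Y\in D$.

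The core of the argument is a singular-value inequality. I would invoke Weyl's inequality for singular values, namely $\sigma_{i+j-1}(A+B)\le \sigma_i(A)+\sigma_j(B)$. Applied to the decomposition $X = (X-Y)+Y$ with $j=\kappa+1$ and an arbitrary $i\in\{1,\dots,q-\kappa\}$, and using that $\sigma_{\kappa+1}(Y)=0$ because $\operatorname{rank}(Y)\le \kappa$, this yields
\[
\sigma_{\kappa+i}(X) \le \sigma_i(X-Y) + \sigma_{\kappa+1}(Y) = \sigma_i(X-Y).
\]
Summing the squares over $i=1,\dots,q-\kappa$ gives
\[
\sum_{i=\kappa+1}^{q}\sigma_i^2 \le \sum_{i=1}^{q-\kappa}\sigma_i(X-Y)^2 \le \sum_{i=1}^{q}\sigma_i(X-Y)^2 = \|X-Y\|_F^2,
\]
which is the inequality we need. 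Combined with feasibility of $\hat X$ (clearly $\operatorname{rank}(\hat X)\le \kappa$), this proves $\hat X\in \Pi_D(X)$.

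The only nontrivial ingredient is Weyl's singular-value inequality; this is the potential obstacle, but it is classical (see, e.g., Horn--Johnson), and the result is so standard that it could alternatively be cited directly (as the paper essentially does, by referring to \cite{Markovsky2012,Zhang2011}). An alternative route would be to work out $\|X-Y\|_F^2$ via the trace formula and bound $\operatorname{tr}(X^T Y)$ using von Neumann's trace inequality together with the rank constraint, but the Weyl-based argument is the shortest.
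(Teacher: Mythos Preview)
Your argument is correct: the computation of $\|X-\hat X\|_F^2$ via unitary invariance, the application of Weyl's inequality $\sigma_{i+j-1}(A+B)\le\sigma_i(A)+\sigma_j(B)$ with $j=\kappa+1$, and the subsequent summation are all valid and yield exactly the desired bound. Feasibility of $\hat X$ is clear.

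Note, however, that the paper does \emph{not} actually prove this proposition. As announced at the start of Section~\ref{sec:realizations}, no proofs are given in that section; the result is simply stated with references to \cite{Markovsky2012,Zhang2011}. So there is nothing to compare your approach against beyond what you already observed: you supply a short self-contained proof of a result the paper is content to cite. Your Weyl-based route is the standard textbook argument for the Frobenius case of Eckart--Young--Mirsky, and the alternative via von Neumann's trace inequality that you mention would work equally well.
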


Of course, the computation of the SVD for a matrix $X$ is not a costless operation, so obtaining an element of $\Pi_D(X)$, even though conceptually simple, requires a non-negligible amount of computing resources.

If we restrict the discussion to the case of symmetric positive semi-definite matrices, i.e., $D=\{X\in\mathbb{R}^{n\times n}\mid X\succeq 0,\; \text{rank}(X)\le \kappa\}$, we can resort to the eigenvalue decomposition instead of the SVD \cite{Zhang2011,JiaKanzowMehlitzWachsmuth2021}.

\begin{proposition}
	Let $X\in\mathbb{R}^{n\times n}$ be a symmetric matrix. Let us denote by $X=\sum_{i=1}^{n}\lambda_iv_iv_i^T$ its eigenvalue decomposition, where $\lambda_1\ge\ldots\ge\lambda_n$ are the non-increasingly ordered eigenvalues with corresponding eigenvectors $v_1,\ldots,v_n$. Then, we have $\hat{X} = \sum_{i=1}^{\kappa}\max\{0,\lambda_i\}v_iv_i^T\in\Pi_D(X)$.
\end{proposition}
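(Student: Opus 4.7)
The plan is to establish two things: that $\hat X$ belongs to $D$, and that $\|X - \hat X\|_F \leq \|X - Y\|_F$ for every $Y \in D$. Membership is immediate: $\hat X$ is symmetric, its eigenvalues are $\max\{0,\lambda_1\},\ldots,\max\{0,\lambda_\kappa\},0,\ldots,0$ (all nonnegative), and it has rank at most $\kappa$.

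For optimality, the key technical tool I would invoke is the Hoffman--Wielandt inequality for symmetric matrices: for symmetric $A,B \in \mathbb{R}^{n\times n}$ with decreasingly ordered eigenvalues $\alpha_i$ and $\beta_i$, one has $\|A-B\|_F^2 \geq \sum_i (\alpha_i-\beta_i)^2$. Given any $Y \in D$, its eigenvalues in decreasing order can be written as $\mu_1 \geq \cdots \geq \mu_\kappa \geq 0 = \mu_{\kappa+1} = \cdots = \mu_n$ (nonnegativity coming from positive semi-definiteness, the trailing zeros from the rank bound). Applying the inequality with $A = X$ and $B = Y$ would then give
$$
\|X - Y\|_F^2 \;\geq\; \sum_{i=1}^{\kappa}(\lambda_i - \mu_i)^2 + \sum_{i=\kappa+1}^{n}\lambda_i^2 .
$$
A termwise minimization over $\mu_i \geq 0$ yields $\mu_i^\star = \max\{0,\lambda_i\}$; since $t \mapsto \max\{0,t\}$ is nondecreasing, these values remain in non-increasing order and form an admissible eigenvalue profile for some matrix in $D$, so this lower bound is in principle attainable.

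The remaining step is to verify that $\hat X$ actually attains it. Exploiting orthonormality of the $v_i$'s, the rank-one matrices $v_iv_i^T$ are pairwise orthogonal in the Frobenius inner product with unit Frobenius norm, so a direct expansion of $X-\hat X$ in this family gives
$$
\|X - \hat X\|_F^2 = \sum_{i=1}^{\kappa}\bigl(\lambda_i - \max\{0,\lambda_i\}\bigr)^2 + \sum_{i=\kappa+1}^{n}\lambda_i^2 ,
$$
which coincides with the lower bound. Hence $\hat X \in \Pi_D(X)$.

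The main obstacle is the Hoffman--Wielandt inequality itself, which is nontrivial but classical; I would cite a standard matrix-analysis reference rather than reprove it. An alternative route, which avoids a direct appeal to Hoffman--Wielandt, is to first reduce to the case of diagonal $X$ by orthogonal invariance of both the Frobenius norm and the set $D$ (conjugation by the eigenvector matrix $V$ preserves symmetry, positive semi-definiteness, and rank), and then argue by a direct case analysis on the eigenvalues; this reduction essentially repackages the same content but makes the book-keeping more elementary. A minor care point, worth checking explicitly in either route, is that the sorting convention is consistent when some $\lambda_i$ are negative, so that the prescribed truncation $\mu_i^\star = \max\{0,\lambda_i\}$ is compatible with the eigenvalue ordering assumed in the inequality.
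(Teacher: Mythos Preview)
Your argument is correct. Membership of $\hat X$ in $D$ is clear, the Hoffman--Wielandt inequality for symmetric matrices gives the stated lower bound, the termwise minimization over $\mu_i\ge 0$ is valid, and the computation of $\|X-\hat X\|_F^2$ via orthonormality of the rank-one projectors $v_iv_i^T$ is accurate. The care point you flag about ordering is also handled correctly: monotonicity of $t\mapsto\max\{0,t\}$ keeps the truncated eigenvalues non-increasing, so the minimizing profile is admissible.

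As for comparison: the paper does not actually prove this proposition. Section~\ref{sec:realizations} explicitly states that it contains no proofs, and the result is attributed to the references \cite{Zhang2011,JiaKanzowMehlitzWachsmuth2021}. So there is no ``paper's own proof'' to compare against; your Hoffman--Wielandt route is a standard and clean way to justify the cited statement, and the alternative orthogonal-invariance reduction you sketch is essentially the same argument unpacked in different coordinates.
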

We can thus observe that, in this particular case, in order to compute the projection onto the set $D$ we only need to find the $\kappa$ largest eigenvalues with the corresponding eigenvectors; this can be done efficiently, especially when $\kappa$ is small, as in most applications. 

A (exact) Penalty Decomposition scheme was developed in \cite{Zhang2011} to tackle low-rank optimization problems, exploiting the above closed form rules for projection onto $D$ both in the general and the positive semi-definite cases. The analysis in Section \ref{Sec:Alg} shows that the algorithmic framework maintains the same convergence properties even when the $X$-update step is carried out in an inexact fashion.

\subsection{Box-Switching Constrained Problems}

A wide class of relevant optimization problems with difficult geometric constraints is constituted by the so called \textit{box-switching} constrained problems \cite{JiaKanzowMehlitzWachsmuth2021} that can be formalized as follows:
\begin{equation}
\label{switching}
\begin{aligned}
\min_{x,y\in\mathbb{R}^n}\;&f(x,y)\\\text{s.t. }&G(x,y)\in C\\&(x,y)\in D=\{(x,y)\mid x_iy_i=0\;\forall\,i,\;l_x\le x\le u_x, \;l_y\le y\le u_y\},
\end{aligned}
\end{equation}
where, for simplicity, we assume that $l_x\le0 \le u_x$ and $l_y\le 0 \le u_y$.

This setting covers various disjunctive programming problems such as problems with
\begin{itemize}
	\item switching constraints \cite{Mehlitz2020b}: $l_x=l_y=-\infty$, $u_x=u_y=\infty$, \item complementarity constraints \cite{Ye1999}: $l_x=l_y=0$, $u_x=u_y=\infty$, \item relaxed sparsity constraints \cite{Burdakov2016}: $l_x=-\infty$, $u_x=\infty$, $l_y=0$, $l_y=1$.
\end{itemize} 
It is easy to realize that projection onto the set $D$ in this case is simple. Indeed, let us first consider the projection onto classical bound constraints $[l,u]$ of a vector $w$. Since the constraints are separable, we can immediately obtain the projection by computing, for each component $i$, the value
$$(P_{[l,u]}(w))_i=\begin{cases}
w_i&\text{if } l_i \le w_i\le u_i,\\
l_i&\text{if }w_i<l_i,\\ u_i&\text{if }w_i> u_i.
\end{cases}$$
With this in mind, noting that the set $D$ is also (pairwise) separable, we can obtain an element $(\hat{x},\hat{y})\in\Pi_D[(\bar{x},\bar{y})]$ by first computing
$$\tilde{x} = P_{[l_x,u_x]}(\bar{x}),\qquad \tilde{y} = P_{[l_y,u_y]}(\bar{y})$$ and then setting
$$(\hat{x}_i,\hat{y}_i) = \begin{cases}
(\tilde{x}_i,0)&\text{if } \bar{x}_i^2+(\tilde{y}_i-\bar{y}_i)^2\ge (\tilde{x}_i-\bar{x}_i)^2 + \bar{y}_i^2,\\ (0,\tilde{y}_i)&\text{otherwise}.
\end{cases}$$
Computing the projection onto $D$ thus amounts to computing $2n$ projections onto real intervals, which can be done with low computational effort. For this reason, a Penalty Decomposition type scheme again appears particularly appealing for this class of problems.

\subsection{General Disjunctive Programs}
\label{sec:disjoint}

A broad class of optimization problems with geometric constraints is represented by programs where variables are required to satisfy at least one among several sets of constraints:
\begin{equation}
	\label{eq:prob_disjoint}
	\begin{aligned}
		\min_{x\in\mathbb{R}^n}\;& f(x)\\
		\text{s.t. }& G(x)\in C,\\&x\in D = \bigcup_{i=1}^{N} D_i,
	\end{aligned}
\end{equation}
where $D_i$, $i=1,\ldots,N$ are closed convex sets. The resulting overall feasible set of these \textit{disjunctive programming problems} \cite{Flegel2007} typically takes the structure of a nonconvex, disconnected set.
Projections onto $D$ in this case can be computed by finding the closest among the $N$ projections onto $D_1,\ldots,D_N$:
$$\Pi_D(x) = \argmin_z\{\|z-x\|\mid z=P_{D_i}(x),\;i=1,\ldots,N\} .$$ 
Since, in general, the projection onto a convex set is already an expensive operation, the projection onto $D$ is consequently a costly task. We shall observe that, in fact, the settings analyzed in the previous subsections are particular instances of this setting where the peculiar structure of sets $D_i$ allows to efficiently compute the projection in smart ways.

The Penalty Decomposition approach might be appealing for problems of this form when the constraints $G(x)\in C$ are numerous and/or nontrivial and $N$ is also large. In these cases, the brute force strategy of solving $N$ problems with convex constraints may become computationally unsustainable and PD might represent an appealing alternative.

\section{Computational Experiments}
\label{sec:experiments}

In this section, we report the results of an extensive experimentation aimed at demonstrating the potential and the benefits of using the Penalty Decomposition algorithm on various classes of problems. The experiments have two main goals:
\begin{itemize}
	\item analyze the behavior of penalty decomposition in different settings and understand how to make it as efficient as possible; 
	\item compare the penalty decomposition approach with the augmented Lagrangian method proposed in \cite{JiaKanzowMehlitzWachsmuth2021}, which is, to the best of our knowledge, the only available algorithm from the literature designed to handle the general setting \eqref{P}. 
\end{itemize}

The code for the experiments has entirely been implemented in Python 3.9 and all the experiments have been run on a machine with the following specifications: Intel Xeon Processor E5-2430 v2, 6 physical cores (12 threads), 2.50 GHz, 16 GB RAM.

We considered benchmarks of problems from the classes discussed in Section \ref{sec:realizations}, i.e., cardinality constrained problems, low-rank approximation problems and disjunctive programming problems.

For the Penalty Decomposition approach (Algorithm \ref{Alg:PenDecomp}), we set an upper bound to the value of $\tau_k$ equal to $10^8$. We employed for the inner loop (Algorithm \ref{Alg:AM}) the stopping criterion
\begin{equation}
	\label{eq:stop:inner}
	q_{\tau_k}(u^\ell,v^\ell)-q_{\tau_k}(u^{\ell+1},v^{\ell+1})\le \epsilon_\text{in},
\end{equation}
whereas for the outer loop we employ
$$\|x^{k+1}-y^{k+1}\| + \text{dist}_C(G(x^{k+1})) \le \epsilon_\text{out}.$$
Both the above stopping conditions are the ones suggested in \cite{Lu2013} and we set $\epsilon_\text{in} = 10^{-5}$ and $\epsilon_\text{out} = 10^{-5}$. 

As unconstrained optimization solvers for the $x$-update step, we implemented the gradient descent algorithm with Armijo line search. We also ran experiments using the implementations of the conjugate gradient (CG, \cite{Bertsekas1997}), BFGS \cite{Bertsekas1997} and L-BFGS \cite{Liu1989} methods available in the \texttt{scipy} library.
For all of these algorithms, we used the stopping criterion $\|\nabla_x q_{\tau_k}(u^{\ell+1},v^\ell)\|\le \epsilon_\text{solv}$, with $\epsilon_\text{solv}=10^{-5}$ if not specified otherwise. 

As for the augmented Lagrangian method (ALM) from \cite{JiaKanzowMehlitzWachsmuth2021}, it employs as inner solver of subproblems the spectral gradient method (SGM) proposed in the same work. With reference to \cite[Algorithm 3.1]{JiaKanzowMehlitzWachsmuth2021}, we set $\sigma=10^{-5}$, $\gamma_0=1$, $\gamma_\text{max}=10^{12}$, $m=10$, $\tau=2$ (note that here it does not denote the penalty parameter). As for the ALM (\cite[Algorithm 4.1]{JiaKanzowMehlitzWachsmuth2021}), we set $\eta=0.8$. We employed the multipliers safeguarding technique, projecting the values obtained using the standard Hestenes-Powell-Rockafellar updates onto the box $[-10^{8}, 10^8]$.
For the spectral gradient loop, we used the stopping condition
$$\max_{j=0,\ldots,m-1} q_{\tau_k}(x^{\ell-m}) - \min_{j=0,\ldots,m}q_{\tau_k}(x^{\ell+1-m})\le 10^{-5},$$
where here we have used the notation of the present paper.
We used the same stopping condition \eqref{eq:stop:inner} as the PD method with $\epsilon_\text{in}=10^{-5}$ for the inner loop of the ALM, whereas for the outer loop we require $\text{dist}_C(G(x^{k+1}))\le \epsilon_\text{out}$, with $\epsilon_\text{out}=10^{-5}$. The stopping conditions have been chosen as similar as possible for the two algorithms, in order to have a fair comparison.

We also did experiments with a variant of our proposed approach, employing safeguarded Lagrange multipliers in an augmented Lagrangian fashion, i.e., we take the ALM approach from \cite{JiaKanzowMehlitzWachsmuth2021} and combine it with the decomposition idea to solve the resulting subproblems. The setting of multipliers and penalty parameter updates is the same as the one we employed for the ALM itself. In the following, we will show that this modification (denoted PDLM), which does not have any major impact in the convergence analysis, leads to significant benefits in practice. It is interesting to note that this finding is in contrast with the remarks that can be found in the conclusions of \cite{Lu2013}.

Finally, we point out that we did not report the values for the initial penalty parameter $\tau_0$ and its growing rate $\alpha_\tau$. Indeed, these parameter are quite crucial for the overall performance of both PD and ALM algorithms and have been suitably selected for each
class of problems. Thus, we will report each time the specific values of these two parameters.

\subsection{Sparsity Constrained Optimization Problems}

We begin our numerical analysis with sparsity constrained problems. The reason we start with this class of problems is twofold: a) the original PD approach was designed for these problems and b) results are more easily and intuitively interpretable.

We considered various experimental settings with problems of this class. Firstly, we begin with the simplest possible problems, i.e., convex quadratic problems with only sparsity constraints:
\begin{equation}
\label{eq:ccqp}
	\min_x\; \frac{1}{2}x^TQx+\nu c^Tx\quad\text{ s.t. } \|x\|_0\le s.
\end{equation}
We randomly generated instances of problem \eqref{eq:ccqp}, according to the following procedure:
\begin{gather}
\label{eq:gen_qp_a}
	Q = YDY,\quad Y=I-\frac{2}{\|y\|^2}yy^T,\quad y\in\mathbb{R}^n:y_i\sim \mathcal{U}(-1,1),\\\label{eq:gen_qp_b} D=\text{diag}(d_1,\ldots,d_n),\quad d_i = \exp\left(\frac{i-1}{n-1}n_{\text{cond}}\right),\quad c\in\mathbb{R}^n:c\sim\mathcal{U}(-1,1)
\end{gather}
where $n_\text{cond}$ denotes the desired condition number of the matrix $Q$.
We generated three instances with $n_\text{cond}=10$, $n\in\{10,25,50\}$, $s=3$, $\nu=5$ to evaluate the impact of different solvers for the $x$-update step on the alternating minimization scheme and, in turn, on the overall PD approach.

We report in Table \ref{tab:inner} the results obtained by running PD equipped with different inner solvers starting from the origin. We also ran the variant with Lagrange multipliers of our approach only with L-BFGS as inner solver; here we set $\tau_0=1$ and $\alpha_\tau=1.1$. Moreover, we considered  the spectral gradient method for comparison. Note that, since there are no additional constraints, there is no need to resort to the ALM.

\begin{table}[htb]
	\centering
	\caption{Results of experiments on three random instances of cardinality constrained quadratic problems \eqref{eq:ccqp}. Problems were generated according to \eqref{eq:gen_qp_a}-\eqref{eq:gen_qp_b} with $n_\text{cond}=10$, $s=3$, $\nu=5$.}
	\label{tab:inner}
	\begin{tabular}{|l|l|l|r|}\hline
		$\mathbf{n}$&\textbf{Algorithm}& \textbf{f\_val}&\textbf{runtime (s)}\\\hline
		\multirow{5}{*}{$10$}&\texttt{PD\_gd}&-9.70&3.68\\&\texttt{PD\_bfgs-scipy}&-9.70&0.53\\&\texttt{PD\_lbsfgs-scipy}&-9.70&0.31\\&\texttt{PDLM\_lbsfgs-scipy}&-9.63&0.26\\&\texttt{SGM}&-9.63&0.05\\\hline\multirow{5}{*}{$25$}&\texttt{PD\_gd}&-15.12&5.96\\&\texttt{PD\_bfgs-scipy}&-15.12&0.91\\&\texttt{PD\_lbsfgs-scipy}&-15.12&0.54\\&\texttt{PDLM\_lbsfgs-scipy}&-16.19&0.24\\&\texttt{SGM}&-15.58&0.01\\\hline
		\multirow{5}{*}{$50$}&\texttt{PD\_gd}&-23.92&9.17\\&\texttt{PD\_bfgs-scipy}&-23.92&1.69\\&\texttt{PD\_lbsfgs-scipy}&-23.92&0.89\\&\texttt{PDLM\_lbsfgs-scipy}&-23.92&0.31\\&\texttt{SGM}&-23.92&0.01\\\hline
	\end{tabular}
\end{table}

As expected, the use of quasi-Newton type solvers is highly beneficial:  BFGS leads to much faster convergence than the simple gradient method; the L-BFGS provides an additional, substantial speed up. The presence of Lagrange multipliers also seems to be beneficial, both in terms of efficiency and of quality of the obtained solution.
Based on these result, in the following we will always be using L-BFGS for the $x$-update step in Algorithm~\ref{Alg:AM}.

Note that the spectral gradient method clearly outperforms the PD approach in this case. This is indeed not surprising: being there no additional constraint, there is no need with the SGM to adopt a sequential penalty strategy, which is costly.

Next, we turn to a simple verification of the convergence properties of the PD approach. In particular, we consider the artificial example \cite[Example 2.2]{BeckEldar2013}, which is an instance of \eqref{eq:ccqp} with $Q= E+I$, being $E$ the matrix of all ones, $c = -(3,2,3,12,5)^T$, $\nu=1$ and $s=2$. We ran both PD and PDLM, with $\alpha_\tau=1.1$ from 1000 different starting points randomly generated in the hyper-box $[-10,10]^5$. We observed that the result strongly depends on the choice of $\tau_0$, as we report in Table \ref{tab:convergence}. Interestingly, both algorithms always converged to the global minimum $f(x^\star)=-41.33$ when we set $\tau_0=0.1$; in fact, we observed the same result for smaller values of $\tau_0$. On the other hand, as $\tau_0$ grows worse local minimizers become increasingly probable; the presence of Lagrange multipliers seems to alleviate, but not to suppress, this inconvenience. We argue that large values of $\tau_0$ make PD schemes more dependent on the starting point: since usually $x^0=y^0$, the penalty term is at the first iteration equal to $\tau_0\|x-x^0\|^2$, which binds variable $x$ close to the start.

\begin{table}[htbp]
	\centering
	\caption{Convergence of Penalty Decomposition methods on \cite[Example 2.2]{BeckEldar2013} for different values of $\tau_0$. We report the number of times an objective value has been obtained out of 1000 runs from different starting points chosen randomly in $[-10,10]^n$.} 
	\label{tab:convergence}
	\begin{tabular}{|l|l|l|l|l|l|}
		\hline
		$\boldsymbol{\tau_0}$&\textbf{Algorithm}&$\mathbf{f=-41.33}$&$\mathbf{f=-39}$&$\mathbf{f=-36.33}$&\textbf{Others}\\\hline
		\multirow{2}{*}{$0.1$}&PD&1000&0&0&0\\&PDLM&1000&0&0&0 \\\hline
		\multirow{2}{*}{$1$}&PD&661&339&0&0\\&PDLM&1000&0&0&0 \\\hline
		\multirow{2}{*}{$10$}&PD&683&275&42&0\\&PDLM&620&326&54&0 \\\hline
		\multirow{2}{*}{$100$}&PD&549&250&40&161\\&PDLM&527&295&57&121\\\hline
	\end{tabular}
\end{table}

At this point, we have devised a setting that apparently makes the PD approach efficient and effective. We therefore expect the algorithm to indeed be a good choice to resort to when: a) additional constraints are present and/or b) the projection operator is costly. In the former case, SGM needs to be employed within another sequential scheme, namely, the ALM, which is the only alternative to the PD available from the literature; in the latter case, the advantage of PD over the ALM may not be straightforward. In fact, the two algorithms share a similar structure, sequentially solving penalized subproblems; in order to do so, unconstrained continuous optimization steps and projections onto $D$ are repeatedly carried out; however, in PD many descent steps can be carried out before turning to the projection step; on the contrary, in the ALM we need to do the projection after every gradient step (in fact, we do it many times per iteration of the SGM to satisfy the acceptance criterion), cf.\ the discussion
in Remark~\ref{Rem:ModSubsolver}.

We now turn to sparsity constrained problems with additional constraints. In particular, we keep considering convex quadratic problems, but with simplex constraints, i.e., problems of the form
\begin{equation*}
\label{eq:portfolio}
	\min_x\;\frac{1}{2}x^TQx + \nu c^Tx,\quad\text{ s.t. } e^Tx = 1,\quad x\ge0,\quad \|x\|_0\le s,
\end{equation*}
where $e\in\mathbb{R}^n$ denotes the vector of all ones. This is a classical sparse portfolio optimization problem \cite{Bertsimas2022}, where $Q$ and $c$ denote the covariance matrix and the mean of $n$ possible assets. 

Portfolio optimization problems are particularly useful to test the proposed algorithm since we can easily obtain the global optimum to be used as a reference. Indeed, we can do so exploiting the mixed-integer reformulation of the problem with binary indicator variables and big-M type constraints and employing efficient software solvers such as Gurobi \cite{Gurobi}.

We first consider synthetic problems. Using \eqref{eq:gen_qp_a}-\eqref{eq:gen_qp_b}, we generated 10 problems for each combination of $n\in\{20,40,60\}$ and $n_\text{cond}\in\{10,100, 500\}$, for a total of 90 problems. We set $s=4$ when $n=20$, $s=7$ for $n=40$ and $s=9$ for $n=60$. We set $\nu=1$ and use as starting point of the experiments $\tilde{x} = (1/n,\ldots,1/n)^T$; we ran PD, PDLM, ALM all with $\tau_0=1$ and $\alpha_\tau=1.1$. We also ran Gurobi on all instances to obtain the global optimizer to be used as reference; note that Gurobi indeed finds the certified global minimum in tens of seconds.
The overall results of the experiments are reported in Figure \ref{fig:pp_ccqp}. The results concerning efficiency (runtime) are presented in the form of performance profiles \cite{Dolan2002} in Figure \ref{fig:pp_ccqp_time}. We can observe that Penalty Decomposition with Lagrange multipliers is generally faster than the other two considered algorithms.
As for the quality of the retrieved solutions, we plot in Figure \ref{fig:pp_ccqp_fval} the cumulative distribution of the relative gap between the solution found by a solver and the certified global optimum found with Gurobi; the result of PDLM is surprisingly remarkable, as it almost always reached a value very close, and often equal to, the global optimum; on the contrary, both PD and the ALM end up with substantially suboptimal solutions in almost a half of the cases.

\begin{figure}[htbp]
	\centering
	\begin{subfigure}[t]{0.49\textwidth}
		\centering
		\includegraphics[width=\textwidth]{./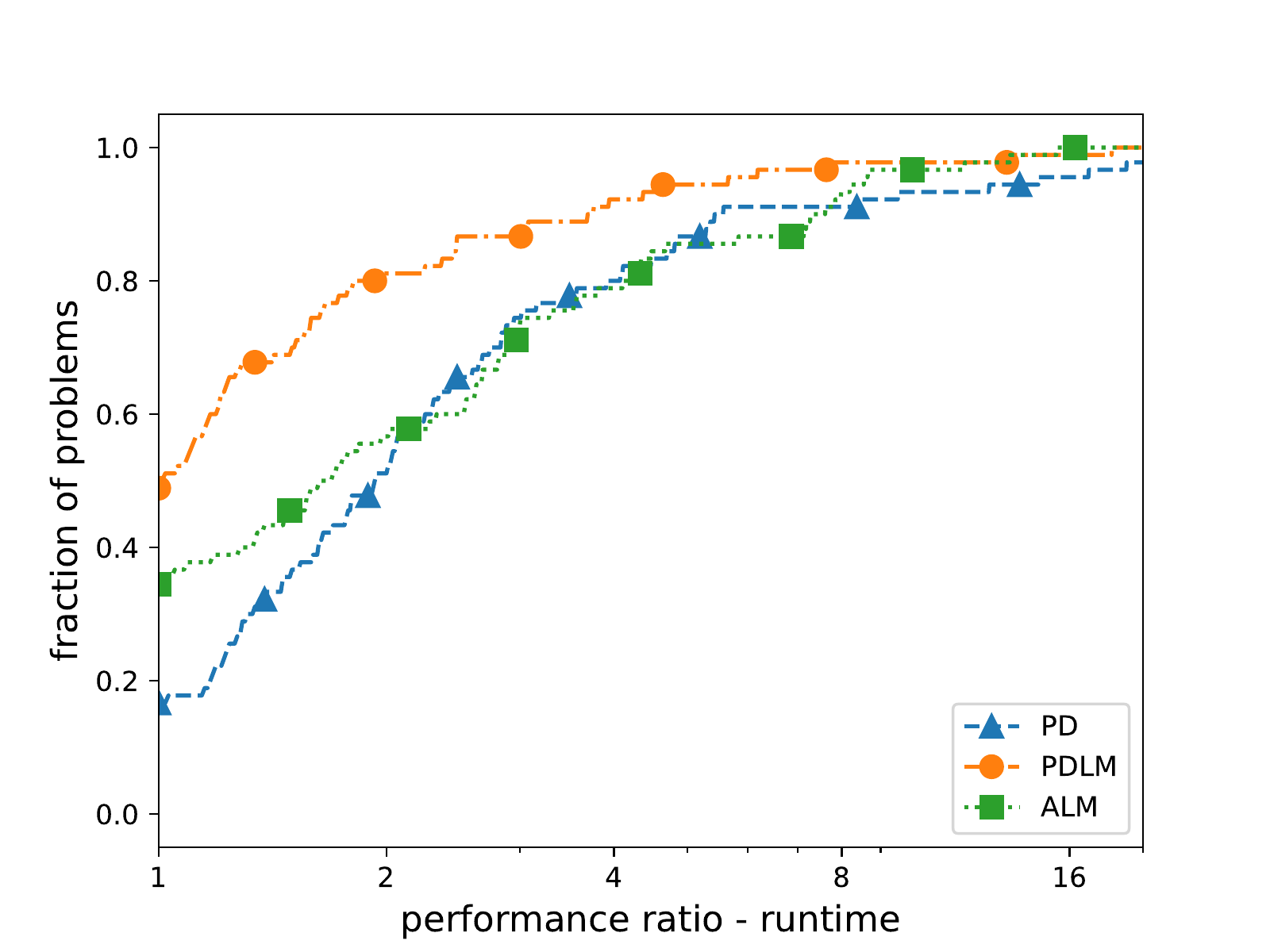}
		\caption{Perfromance profile of runtimes.}
		\label{fig:pp_ccqp_time}
	\end{subfigure}
	\hfill
	\begin{subfigure}[t]{0.49\textwidth}
		\centering
		\includegraphics[width=\textwidth]{./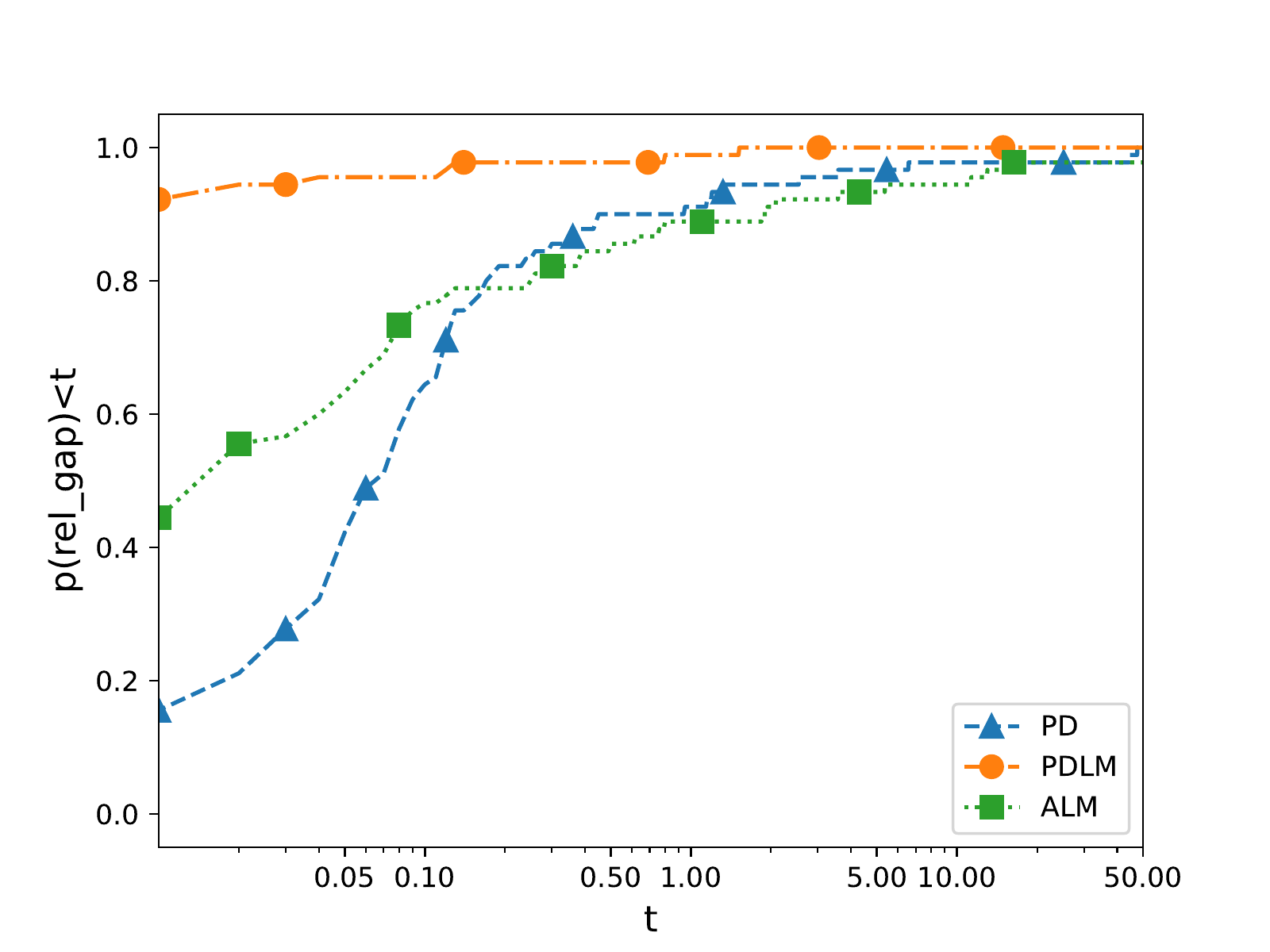}
		\caption{Cumulative distribution of relative gap to the certified global optimum.}
		\label{fig:pp_ccqp_fval}
	\end{subfigure}
	\caption{Results of experiments on 90 randomly generated sparse portfolio selection problems, running PD, PDLM and the ALM.}
	\label{fig:pp_ccqp}
\end{figure}

We conclude the analysis on sparsity constrained problems looking at the results on 6 instances of real world portfolio selection problems. In particular, the data used in the experiments consists of daily data for securities from the FTSE 100
index, from 01/2003 to 12/2007. The three datasets are referred to as DTS1, DTS2, and
DTS3, and are formed by $n=12$, 24, and 48 securities, respectively. We also included three
datasets from the Fama/French benchmark collection (FF10, FF17, and FF48, with $n$ equal to 10, 17, and 48), using the monthly returns from 07/1971 to 06/2011. The datasets are generated as in \cite{Cocchi2020}. For each dataset, we define an instance of problem \eqref{eq:portfolio}: the values of $s$ and $\nu$ are set as reported in Table \ref{tab:portfolio_real}, and are such that the cardinality constraint is active at the optimal solution. We used again $\tilde{x} = (1/n,\ldots,1/n)^T$ as starting point.
As for the penalty parameter, we set $\tau_0=0.01$ and for the Penalty Decomposition methods, whereas we found that a larger value $\tau_0=1$ was beneficial for the ALM. The parameter $\alpha_\tau$ was set to $1.01$ for all methods.
The results are reported in Table \ref{tab:portfolio_real} and we can observe that the trends outlined by the previous experiments are substantially confirmed.

\begin{table}[htb]
	\centering
	\caption{Results of experiments on 6 real world sparse portfolio selection problems, solved using different algorithmic approaches.}
	\label{tab:portfolio_real}
	\begin{tabular}{|c|l|r|r|}
		\hline
		\textbf{Problem ($\mathbf{s,\boldsymbol{\nu}}$)}&\textbf{Algorithm}&\textbf{f\_val}&\textbf{runtime (s)}\\\hline
		\multirow{4}{*}{DTS1 $(2,0.001)$}&Gurobi&4.10e-05&0.02\\&PD&4.27e-05&2.01\\&PDLM&4.25e-05&2.07\\&ALM&4.69e-05&8.72
		\\\hline
		\multirow{4}{*}{DTS2 $(4,0.001)$}&Gurobi&2.52e-05&0.04\\&PD&2.75e-05&2.49\\&PDLM&2.75e-05&1.55\\&ALM&2.75e-05&5.36
		\\\hline
		\multirow{4}{*}{DTS3 $(6,0.001)$}&Gurobi&2.19e-05&0.19\\&PD&2.43e-05&4.30\\&PDLM&2.42e-05&1.72\\&ALM&2.48e-05&5.98
		\\\hline
		\multirow{4}{*}{FF10 $(2,0.05)$}&Gurobi&2.87e-05&0.01\\&PD&3.07e-05&2.43\\&PDLM&2.96e-05&3.38\\&ALM&2.87e-05&7.21
		\\\hline
		\multirow{4}{*}{FF17 $(2,0.05)$}&Gurobi&2.08e-05&0.01\\&PD&3.34e-05&3.26\\&PDLM&3.11e-05&3.98\\&ALM&3.46e-05&47.89
		\\\hline
		\multirow{4}{*}{FF48 $(5,0.05)$}&Gurobi&-1.10e-05&0.06\\&PD&1.59e-05&11.09\\&PDLM&-9.30e-06&5.49\\&ALM&9.66e-05&0.47\\\hline
	\end{tabular}
\end{table}

\subsection{Low-Rank Optimization Problems}

In this section, we study problems as discussed in Section \ref{sec:rank} where $\mathbb{X}=\mathbb{R}^{m\times n}$ and $D$ consists of a low-rank matrices space.

To begin with, we consider the class of \textit{nearest low-rank correlation matrix problems}, which was already used as a benchmark in \cite{Zhang2011}. In detail, the problem can be formulated as
\begin{equation*}
	\min_{X\in\mathbb{R}^{n\times n}}\frac{1}{2}\|X-A\|_F^2\quad \text{s.t.}\quad X^T=X, \; X\succeq 0,\; \text{diag}(X) = e,\; \text{rank}(X)\le \kappa,
\end{equation*} 
where $A$ is a given symmetric correlation matrix. The test problems we consider are the same as in \cite{Zhang2011}, and their corresponding matrix $A$ is defined as follows:
\begin{itemize}
	\item (P1) $A_{ij} = 0.5+0.5\exp(-0.05|i-j|)$ for all $i,j$;
	\item (P2) $A_{ij} = \exp(-|i-j|)$ for all $i,j$;
	\item (P3) $A_{ij} = 0.6+0.4\exp(-0.1|i-j|)$ for all $i,j$.
\end{itemize}   
For each of the above problems, we considered the instances with $n=200$ and $n=500$ and a value of $\kappa=5,10$ and $20$.

We experimentally compared the ALM and some implementations of the Penalty Decomposition approach; for all these algorithms, we set $\tau_0=1$ and $\alpha_\tau= 1.2$. We also needed for these experiments to set the upper bound on the value of $\tau_k$ to $10^{12}$.

Note that solving the $X$-update subproblem with an iterative solver has a significant cost, as we are considering problems with up to $n\times n =250000$ variables; moreover, we are dealing with ill-conditioned quadratic problems, thus we found convenient switching from L-BFGS to the CG method.
We tested two settings for the $X$-update step with CG: a strongly inexact setting, where the CG method is stopped after at most 5 steps (\texttt{PD-cg-inaccurate}), or when the norm of the gradient is smaller than $\epsilon_\text{solv}=0.1$, and a more accurate setting, where up to 20 CG steps are carried out and the tolerance for the gradient norm stopping condition is set to $0.001$ (\texttt{PD-cg-accurate}).

In addition, we note that, in fact, the $X$-update subproblem
$$\min_{\substack{X\in\mathbb{R}^{n\times n}\\X=X^T}} \frac{1}{2}\|X-A\|_F^2 + \frac{\tau}{2}(\|X-Y\|_F^2+\|\text{diag}(X)-e\|^2)$$
can be solved to global optimality in closed form; we thus also carried out experiments with this option (\texttt{PD-exact}); moreover, we also consider the strategy adopted in \cite{Zhang2011}, where the constraint $\text{diag}(X)=e$ is kept as a lower-level constraint and the $X$-update subproblem is still solved in closed form (\texttt{PD-exact-lower-level}).

We finally report that we found the introduction of Lagrange multipliers associated with constraints $G(x)\in C$ useful. We instead noticed that multipliers associated with the constraint $X=Y$ are not helpful. This observation is in line with the work in \cite{Zhang2011}, where only the constraint $X=Y$ was in practice handled by the penalty approach and multipliers were reported not to be beneficial. 
In the experiments described in the following, only multipliers associated with the original problem constraints have been employed.
The results of the experiment are reported in Tables \ref{tab:correlation_p1}, \ref{tab:correlation_p2} and \ref{tab:correlation_p3}.

\begin{table}[htbp]
	\centering
	\caption{Results of experiments on nearest low-rank correlation matrix problem (P1); for each instance we report the runtime, objective value and number of (inner) iterations for each considered solver.}
	\label{tab:correlation_p1}
	\begin{tabular}{|c|l|r|r|r|}
		\hline
		\textbf{Problem ($\mathbf{n,\boldsymbol{\kappa}}$)}&\textbf{Algorithm}&\textbf{f\_val}&\textbf{runtime (s)}&\textbf{iter}
		\\\hline
		\multirow{5}{*}{P1$(200,5)$}
		&\texttt{PD-cg-inaccurate}&183.8&19.6&1539\\
		&\texttt{PD-cg-accurate}&183.7&104.3&8470\\
		&\texttt{PD-exact}&183.7&37.8&9179\\
		&\texttt{PD-exact-lower-level}&183.7&20.2&6494
		\\&\texttt{ALM}&183.7&124.8&23583
		\\\hline
		\multirow{5}{*}{P1$(200,10)$}
		&\texttt{PD-cg-inaccurate}&27.7&9.5&597\\
		&\texttt{PD-cg-accurate}&27.6&22.11&3277\\
		&\texttt{PD-exact}&27.6&22.7&3587\\
		&\texttt{PD-exact-lower-level}&27.6&12.4&2606
		\\&\texttt{ALM}&27.6&25.8&3705
		\\\hline
		\multirow{5}{*}{P1$(200,20)$}
		&\texttt{PD-cg-inaccurate}&3.7&8.7&273\\
		&\texttt{PD-cg-accurate}&3.5&37.1&1194\\
		&\texttt{PD-exact}&3.5&13.7&1332\\
		&\texttt{PD-exact-lower-level}&3.5&7.5&1006
		\\&\texttt{ALM}&3.5&31.1&2261
		\\\hline
		\multirow{5}{*}{P1$(500,5)$}
		&\texttt{PD-cg-inaccurate}&3108.0&466.7&5602\\
		&\texttt{PD-cg-accurate}&3107.0&2396.3&29689\\
		&\texttt{PD-exact}&3107.0&619.4&33654\\
		&\texttt{PD-exact-lower-level}&3107.0&339.4&23053
		\\&\texttt{ALM}&3107.0&1710.1&47539
		\\\hline
		\multirow{5}{*}{P1$(500,10)$}
		&\texttt{PD-cg-inaccurate}&748.2&543.5&2132\\
		&\texttt{PD-cg-accurate}&748.2&3410.9&12673\\
		&\texttt{PD-exact}&748.2&356.2&14299\\
		&\texttt{PD-exact-lower-level}&748.2&207.6&9846
		\\&\texttt{ALM}&748.2&1351.5&17322		
		\\\hline
		\multirow{5}{*}{P1$(500,20)$}
		&\texttt{PD-cg-inaccurate}&123.7&200.2&811\\
		&\texttt{PD-cg-accurate}&123.4&1425.6&5078\\
		&\texttt{PD-exact}&123.4&216.0&5787\\
		&\texttt{PD-exact-lower-level}&123.4&127.5&4077
		\\&\texttt{ALM}&123.4&1201.9&13568
		\\\hline
	\end{tabular}
\end{table}

\begin{table}[htbp]
	\centering
	\caption{Results of experiments on nearest low-rank correlation matrix problem (P2); for each instance we report the runtime, objective value and number of (inner) iterations for each considered solver.}
	\label{tab:correlation_p2}
	\begin{tabular}{|c|l|r|r|r|}
		\hline
		\textbf{Problem ($\mathbf{n,\boldsymbol{\kappa}}$)}&\textbf{Algorithm}&\textbf{f\_val}&\textbf{runtime (s)}&\textbf{iter}
		\\\hline
		\multirow{5}{*}{P2$(200,5)$}
		&\texttt{PD-cg-inaccurate}&3701.2&51.3&4312\\
		&\texttt{PD-cg-accurate}&3700.8&181.7&15415\\
		&\texttt{PD-exact}&3700.8&63.4&16360\\
		&\texttt{PD-exact-lower-level}&3700.8&32.6&10724
		\\&\texttt{ALM}&3700.8&156.4&22519
		\\\hline
		\multirow{5}{*}{P2$(200,10)$}
		&\texttt{PD-cg-inaccurate}&1703.7&29.8&2162\\
		&\texttt{PD-cg-accurate}&1703.1&100.85&7678\\
		&\texttt{PD-exact}&1703.1&45.0&8171\\
		&\texttt{PD-exact-lower-level}&1703.1&24.1&5384
		\\&\texttt{ALM}&1703.1&136.7&14017
		\\\hline
		\multirow{5}{*}{P2$(200,20)$}
		&\texttt{PD-cg-inaccurate}&712.2&30.9&1065\\
		&\texttt{PD-cg-accurate}&712.0&118.2&3738\\
		&\texttt{PD-exact}&712.0&35.0&3969\\
		&\texttt{PD-exact-lower-level}&712.0&19.2&2629
		\\&\texttt{ALM}&712.0&117.1&6519
		\\\hline
		\multirow{5}{*}{P2$(500,5)$}
		&\texttt{PD-cg-inaccurate}&24249.5&950.8&11449\\
		&\texttt{PD-cg-accurate}&24248.2&3463.7&42892\\
		&\texttt{PD-exact}&24248.2&836.4&46711\\
		&\texttt{PD-exact-lower-level}&24248.2&447.9&30240
		\\&\texttt{ALM}&24248.2&627.6&15633
		\\\hline
		\multirow{5}{*}{P2$(500,10)$}
		&\texttt{PD-cg-inaccurate}&11752.9&501.5&5754\\
		&\texttt{PD-cg-accurate}&11749.1&1845&21508\\
		&\texttt{PD-exact}&11749.1&560.9&23576\\
		&\texttt{PD-exact-lower-level}&11749.1&308.4&15317
		\\&\texttt{ALM}&11749.1&2289.7&44857		
		\\\hline
		\multirow{5}{*}{P2$(500,20)$}
		&\texttt{PD-cg-inaccurate}&5505.0&283.1&2878\\
		&\texttt{PD-cg-accurate}&5502.9&1049.6&12854\\
		&\texttt{PD-exact}&5502.9&420.1&11932\\
		&\texttt{PD-exact-lower-level}&5502.9&238.6&7834
		\\&\texttt{ALM}&5502.9&2642.8&360565
		\\\hline
	\end{tabular}
\end{table}

\begin{table}[htbp]
	\centering
	\caption{Results of experiments on nearest low-rank correlation matrix problem (P3); for each instance we report the runtime, objective value and number of (inner) iterations for each considered solver.}
	\label{tab:correlation_p3}
	\begin{tabular}{|c|l|r|r|r|}
		\hline
		\textbf{Problem ($\mathbf{n,\boldsymbol{\kappa}}$)}&\textbf{Algorithm}&\textbf{f\_val}&\textbf{runtime (s)}&\textbf{iter}
		\\\hline
		\multirow{5}{*}{P3$(200,5)$}
		&\texttt{PD-cg-inaccurate}&265.1&22.2&1746\\
		&\texttt{PD-cg-accurate}&265.0&108.7&9224\\
		&\texttt{PD-exact}&265.0&40.3&9934\\
		&\texttt{PD-exact-lower-level}&265.0&21.5&6937
		\\&\texttt{ALM}&265.0&131.6&21078
		\\\hline
		\multirow{5}{*}{P3$(200,10)$}
		&\texttt{PD-cg-inaccurate}&56.1&24.2&704\\
		&\texttt{PD-cg-accurate}&56.1&103.7&3277\\
		&\texttt{PD-exact}&56.1&24.8&3587\\
		&\texttt{PD-exact-lower-level}&56.1&13.5&2606
		\\&\texttt{ALM}&56.1&82.6&3705
		\\\hline
		\multirow{5}{*}{P3$(200,20)$}
		&\texttt{PD-cg-inaccurate}&9.1&9.29&305\\
		&\texttt{PD-cg-accurate}&8.5&45.4&1474\\
		&\texttt{PD-exact}&8.5&16.1&1625\\
		&\texttt{PD-exact-lower-level}&8.5&8.9&1196
		\\&\texttt{ALM}&8.5&109.9&7875
		\\\hline
		\multirow{5}{*}{P3$(500,5)$}
		&\texttt{PD-cg-inaccurate}&2871.4&1451.3&5607\\
		&\texttt{PD-cg-accurate}&2869.3&7902.8&29897\\
		&\texttt{PD-exact}&2869.3&615.6&34110\\
		&\texttt{PD-exact-lower-level}&2869.3&360.8&23229
		\\&\texttt{ALM}&2869.3&300.57&4128
		\\\hline
		\multirow{5}{*}{P3$(500,10)$}
		&\texttt{PD-cg-inaccurate}&982.1&219.9&2460\\
		&\texttt{PD-cg-accurate}&981.8&1182.7&13657\\
		&\texttt{PD-exact}&981.8&371.1&16322\\
		&\texttt{PD-exact-lower-level}&981.8&209.9&10463
		\\&\texttt{ALM}&981.8&444.7&9343		
		\\\hline
		\multirow{5}{*}{P3$(500,20)$}
		&\texttt{PD-cg-inaccurate}&243.8&102.2&964\\
		&\texttt{PD-cg-accurate}&243.7&1049.6&10757\\
		&\texttt{PD-exact}&243.7&420.1&11932\\
		&\texttt{PD-exact-lower-level}&243.7&238.6&7834
		\\&\texttt{ALM}&243.7&2642.8&36056
		\\\hline
	\end{tabular}
\end{table}

We can observe that the exact versions of the PD approach are the best performing ones from all perspectives, with the \texttt{PD-exact-lower-level} originally used in \cite{Zhang2011} standing out. This is in fact not surprising: in this case the exact method solves subproblems not only with higher accuracy, but also employing much less time than using an iterative solver.

Interestingly, however, we observe that the ``inaccurate'' version of the inexact PD attains runtimes that are comparable with the exact approaches, with only small drops in the quality of the retrieved solution. On the other hand, with a slightly more accurate inexact minimization we are always able to retrieve the best solution as the exact methods, with a computational effort generally comparable to that of the ALM.

We can thus deduce that a suitable configuration exists for the inexact PD approach that provides a good trade-off between solution quality and runtime.

These results are encouraging for all those settings where the exact version of the Penalty Decomposition approach is not employable by construction. 

\bigskip

We then turn to a new class of problems, where matrices are not symmetric positive semi-definite and the $X$-update step requires a solver to be carried out. Specifically, we consider the \textit{low-rank based multi-task training \cite{Zhang2021} of logistic models \cite{Hastie2009}}. Given a collection of somewhat related binary classification tasks $\mathcal{T}_1,\ldots,\mathcal{T}_m$, $\mathcal{T}_i=\{(X_i,Y_i)\mid X_i\in\mathbb{R}^{N_i\times n},\;Y_i\in\{0,1\}^{N_i}\}$, where $X_i$ represents the data matrix for each task and $Y_i$ the corresponding labels, we can formalize the multitask logistic regression training problem as   
\begin{equation}
	\label{eq:mtlr}
	\min_{W,U,V\in\mathbb{R}^{m\times n}} \sum_{t=1}^{m}\mathcal{L}(W_t;X_t,Y_t)+\eta\|U\|_F^2\quad \text{s.t. } \text{rank}(V)\le\kappa,\quad W=U+V,
\end{equation}
where the $t$-th row $W_t$ of $W$ denotes the weights of the logistic model for the $t$-th task; each model is defined as the sum of a component independently characterizing the particular task, which is regularized, and a second component that lies in a linear subspace shared by all tasks. The stronger is the regularization parameter $\eta$, the higher will be the similarity of the obtained models. By $\mathcal{L}(W_t;X_t,Y_t)$ we denote the binary cross entropy loss function of the logistic model for task $t$, which is a convex function that, however, cannot be minimized in closed form.

We can easily observe that the problem can be solved by Penalty Decomposition, duplicating variable $V$. The constraint $W=U+V$ can also be tackled by the penalty approach. The update step of the original variables $W,U,V$ cannot be carried out in closed form, thus we need to resort to the inexact version of the PD method.

For the experiments, we used the \texttt{landmine} dataset \cite{Xue2007}, consisting of 9-dimensional data points representing radar images from 29 landmine fields/tasks. Each task aims to classify points as landmine or clutter. There are 14,820 data points in total. Tasks can approximately be clustered into two classes of ground surface conditions, so we expect $r=2$ to be a reasonable bound for the low-rank component of the solution.
We defined four instances of problem \eqref{eq:mtlr}, corresponding to values of $\eta$ in $\{0.01,0.1, 0.5,2\}$.
We examined the behavior of the inexact PD and ALM methods under different parameters configurations. In particular, we considered the following settings:
\begin{itemize}
	\item Penalty Decomposition
	\begin{itemize}
		\item Lagrange multipliers associated with all constraints;
		\item $\tau_0=10^{-3}$, $\alpha_\tau=1.3$;
		\item conjugate gradient (CG) for $x$-update steps;
		\item three options for CG termination criteria:
		\begin{itemize}
			\item $\epsilon_\text{solv}=0.1$, $\text{max\_iters}_{\text{CG}}=5$ (\texttt{pd\_inaccurate});
			\item $\epsilon_\text{solv}=0.05$, $\text{max\_iters}_{\text{CG}}=8$ (\texttt{pd\_mid});
			\item $\epsilon_\text{solv}=0.001$, $\text{max\_iters}_{\text{CG}}=20$ (\texttt{pd\_accurate});
		\end{itemize}
	\end{itemize} 
	\item ALM
	\begin{itemize}
		\item $\tau_0=1$, $\alpha_\tau=1.3$;
		\item two options for spectral gradient parameters:
		\begin{itemize}
			\item $\epsilon_\text{in} = 10^{-1}$, $m=1$, $\gamma_\text{max}=10^6$, $\sigma=0.05$ (\texttt{alm\_fast});
			\item $\epsilon_\text{in} = 10^{-3}$, $m=4$, $\gamma_\text{max}=10^9$, $\sigma=5\cdot 10^{-4}$ (\texttt{alm\_accurate}).
		\end{itemize}
	\end{itemize}
\end{itemize}  
We also report the results obtained by optimizing each task independently. For all PD and ALM configurations, we used as starting solution the one retrieved by single task optimization. Note that both configurations for ALM have lower precision than the default one reported at the beginning of Section \ref{sec:experiments}; with this particular problem, we found the default configuration to be remarkably inefficient; however we shall underline that, in other test cases considered in this paper, these alternative configurations had led to convergence issues concerning numerical errors. The results of the experiment are reported in Figure \ref{fig:landmine}. Note that here we are interested in the optimization process metrics, not in the out-of-sample prediction performance of the obtained models. 
\begin{figure}[ht]
	\centering
	\includegraphics[width=0.95\textwidth]{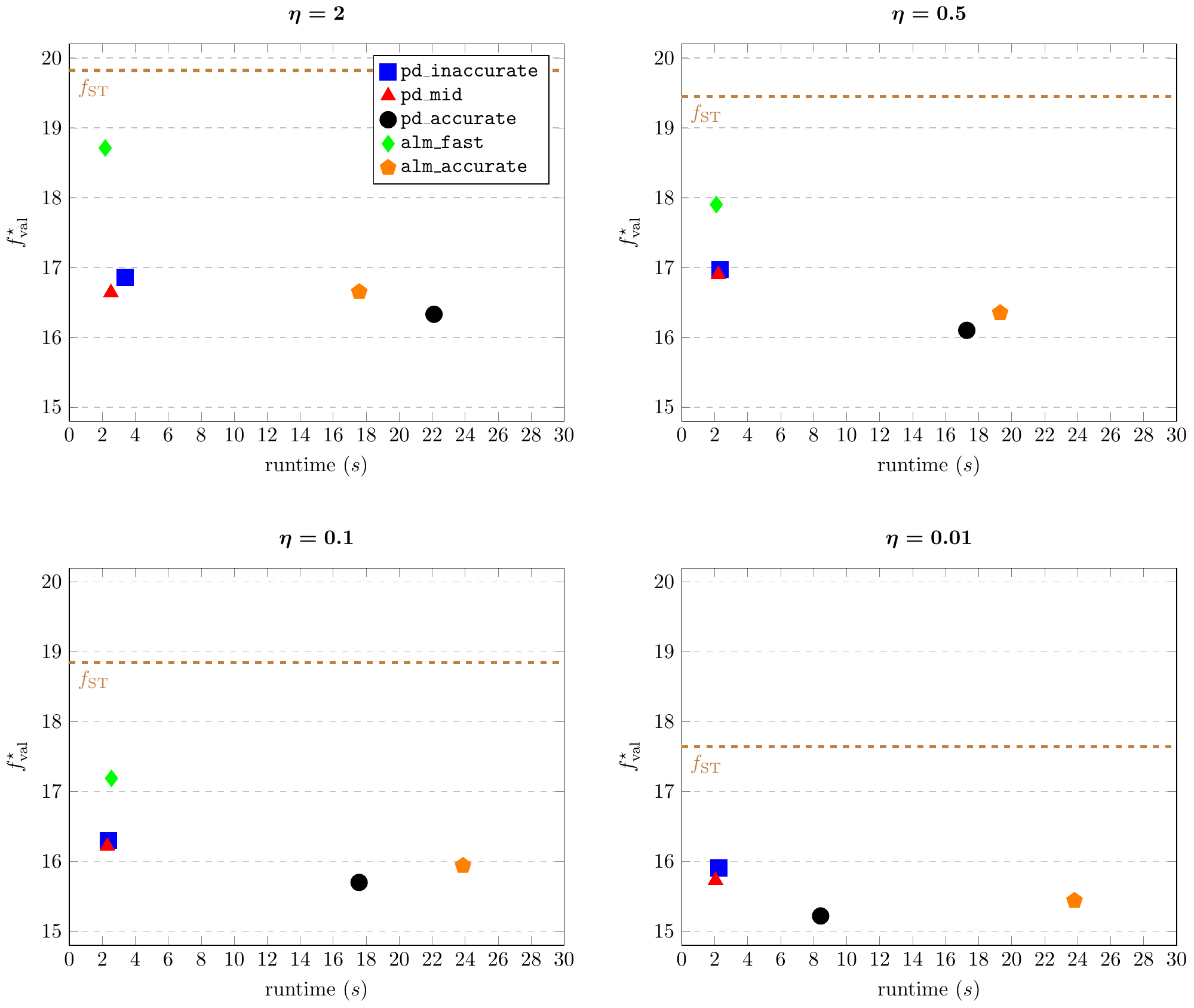}
	\caption{Runtime/quality trade-off for different set-ups of PD and ALM on low-rank multitask logistic regression problems. The four problems are obtained from the \texttt{landmine} dataset for different values of the regularization parameter $\eta$ and setting $\kappa=2$.}
	\label{fig:landmine}
\end{figure}

We can observe that different setups for the algorithms allow to obtain different trade-offs between speed and solution quality. In particular, for the PD method we see that the trend observed with the correlation matrix problems are confirmed: solving the $x$-update subproblem up to lower accuracy allows to save computing time but at the cost of small yet not negligible sacrifice on the solution quality. A similar and even stronger trend can be observed for the ALM. Finally, we can observe that PD appears to be superior to the ALM both in terms of efficiency and effectiveness.

\subsection{Disjunctive Programming Problems}

In this section, we computationally analyze the performance of the Penalty Decomposition approach on problems of the form \eqref{eq:prob_disjoint}. The main goal of this section is to compare the performance of inexact PD with that of the ALM algorithm in a setting where the projection operation is costly and is in fact responsible for the largest part of the computational burden: as highlighted in Section \ref{sec:disjoint}, it amounts to compute the projection onto each of the convex sets $D_i$.

For the experiments, we defined the following test problem:
\begin{align*}
	\min_{x\in\mathbb{R}^n}\;& \mathcal{L}(x)\\ \text{s.t. }&x\in\bigcup_{q=1}^{N}\{x\mid A_qx\le b_q\}\\&\sum_{i=1}^{n}c_{ij}(x_i-p_{ij})^4\le t_j,\quad j=1,\ldots,m, 
\end{align*}
where $\mathcal{L}$ denotes the (convex) logistic loss on a randomly generated dataset of $200$ examples. We assume $A_q\in\mathbb{R}^{s\times n}$ and $b_q\in \mathbb{R}^s$ have the same dimensions for $q=1,\ldots,N$ and their coefficients are uniformly drawn from $[-1,1]$; the coefficients $c$ are randomly picked from $[0,1]$, whereas values for $p$ are from $[-0.5,0.5]$. We set $t_j=0.1$ for all $j$.

First, we consider the problem with $n=10$, $s=12$ and $m=1$, for values of $N$ varying in $\{2,5,10,20,50,100\}$.  In Table \ref{tab:disjoint} we report the results obtained running PD (parameters: $\tau_0=0.1$, $\alpha_\tau=1.2$, $\epsilon_\text{in}=0.01$, Lagrange multipliers employed) and the ALM (parameters $\tau_0=1$, $\alpha_\tau=1.2$ $m=4$, $\sigma=0.01$, $\epsilon_\text{in}=0.01$), together with reference values obtained with the enumeration approach (subproblems are solved using the SLSQP method available in \texttt{scipy}), which allows to retrieve the certified global optimizer. For the projection steps onto sets $D_i$ we used \texttt{gurobi} solver.

\begin{table}[htb]
	\centering
	\caption{Results of experiments on disjoint programming problems, for increasing number of feasible set components $N$. }
	\label{tab:disjoint}
	\begin{tabular}{|c|l|r|r|}
		\hline
		\textbf{$\boldsymbol{N}$ }&\textbf{Algorithm}&\textbf{f\_val}&\textbf{runtime (s)}\\\hline
		\multirow{3}{*}{2}&Enumeration + SLSQP&140.58&0.61\\&PD&140.58&2.10\\&ALM&140.64&12.02
		\\\hline
		\multirow{3}{*}{5}&Enumeration + SLSQP&139.40&2.37\\&PD&139.40&6.92\\&ALM&139.59&67.22
		\\\hline
		\multirow{3}{*}{10}&Enumeration + SLSQP&135.82&5.87\\&PD&135.82&5.87\\&ALM&135.83&48.13
		\\\hline
		\multirow{3}{*}{20}&Enumeration + SLSQP&134.94&9.96\\&PD&134.94&19.41\\&ALM&134.95&170.31
		\\\hline
		\multirow{3}{*}{50}&Enumeration + SLSQP&135.08&18.44\\&PD&135.08&41.66\\&ALM&135.08&267.95
		\\\hline
		\multirow{3}{*}{100}&Enumeration + SLSQP&135.69&30.03\\&PD&135.69&61.26\\&ALM&135.69&465.91
		\\\hline
	\end{tabular}
\end{table}

We can observe that PD was always much faster than the ALM; moreover, it always ended up finding the actual global optimizer; this does not hold true for the ALM. We remark that we verified that, as expected, the computing time is indeed entirely dominated by projection steps. 

Then, we turn to the experiments on an instance where the number of nonlinear constraints shared by all components of the feasible set are numerous and dominate the complexity of solving each subproblem in the enumeration approach. In particular, we consider the previous problem with $N=50$, $n=5$, $m=80$, $s=7$. Here we set $\tau_0=0.1$, $\alpha_\tau=1.5$, $\epsilon_\text{in}=0.02$ for PD and $\tau_0=1$, $\alpha_\tau=1.5$ $m=4$, $\sigma=0.05$, $\epsilon_\text{in}=0.1$ for the ALM. The experiment is repeated 20 times for different random seeds. The results are reported in the form of performance profiles in Figure \ref{fig:diff_dis}. 

\begin{figure}[ht]
	\centering
	\includegraphics[width=0.8\textwidth]{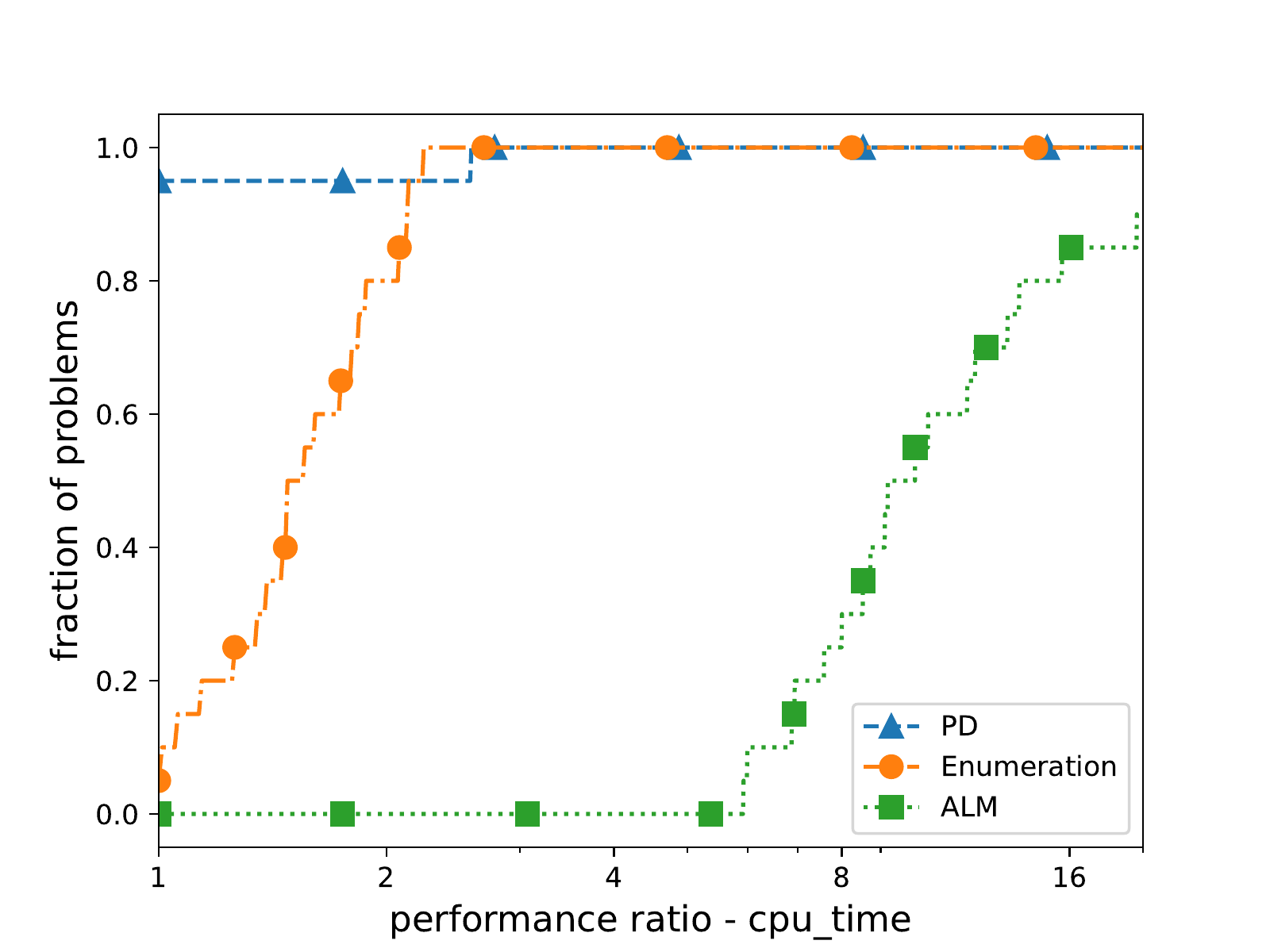}
	\caption{Performance profiles of runtime attained by PD, ALM and the enumeration strategy (with SLSQP) on 20 disjoint programming problems. When a solver does not attain the global minimum, the corresponding runtime is considered infinite when building the profile.}
	\label{fig:diff_dis}
\end{figure}

We deduce that Penalty Decomposition can indeed be a good choice in particularly complicated settings: the global optimizer was always reached, and this result was obtained in a consistently more efficient way than the brute force approach; the ALM does not have a comparable appeal in this context, the reason arguably being the much higher frequency of it resorting to the projection operation.

\section{Conclusions}\label{Sec:Conclusions}

The current paper considers a penalty decomposition scheme
for optimization problems with geometric constraints. It
generalizes existing penalty decomposition schemes both 
by taking advantage of a general abstract (and
usually complicated) constraint (as opposed to having only
particular instances like cardinality constraints) and by 
including further (though supposingly simple) constraints.
The idea and the convergence theory of this method is also
related to recent augmented Lagrangian techniques, but the
decomposition idea turns out to be numerically superior by
allowing more efficient subproblem solvers and using
many less projection steps.

In principle, it should be possible to extend the decomposition
idea to the class of (safeguarded) augmented Lagrangian methods.
Another, and related, question is whether one can exploit 
additional properties of augmented Lagrangian methods in order
to improve the existing convergence theory. For example,
augmented Lagrangian techniques have very strong convergence
properties in the convex case. The particular classes of problems
discussed in this paper are nonconvex, but the nonconvexity
mainly arises from the fact that the abstract set $ D $ is
nonconvex. Since we deal with the complicated set $ D $
explicitly, so that all iterates are feasible with respect to
this set, a natural question is therefore whether one can
prove stronger convergence properties in those situations where
the remaining functions and constraints are convex. This will
be part of our future research.


\end{document}